\newlength{\defbaselineskip}
\newcommand{\setlinespacing}[1]%
           {\setlength{\baselineskip}{#1 \defbaselineskip}}
\numberwithin{equation}{section}
\newtheorem{thm}{Theorem}[section]
\newtheorem{prop}[thm]{Proposition}
\newtheorem{lem}[thm]{Lemma}
\theoremstyle{definition}
\theoremstyle{remark}
\newtheorem{rem}[thm]{Remark}
\numberwithin{equation}{section}
\begin{document}

\title[Absolute continuity of periodic Schr\"odinger operators]
{On absolute continuity of the spectrum of periodic Schr\"odinger operators}

\author{Ihyeok Seo}

\subjclass[2010]{Primary: 47A10; Secondary: 35J10}
\keywords{Spectrum, Schr\"odinger operators}

\address{Department of Mathematics, Sungkyunkwan University, Suwon 440-746, Republic of Korea}
\email{ihseo@skku.edu}

\maketitle

\begin{abstract}
In this paper we find a new condition on a real periodic potential for which
the self-adjoint Schr\"odinger operator may be defined by a quadratic form
and the spectrum of the operator is purely absolutely continuous.
This is based on resolvent estimates and spectral projection estimates in weighted $L^2$ spaces
on the torus, and an oscillatory integral theorem.
\end{abstract}


\section{Introduction}

The behavior of a non-relativistic quantum particle
is described by the wave function $\Psi(t,x)$
which is governed by the Schr\"odinger equation
$$i\partial_t\Psi(t,x)=H\Psi(t,x),\quad(t,x)\in\mathbb{R}\times\mathbb{R}^3,$$
where $H=-\Delta+V(x)$ is the Schr\"odinger operator and $V$ is the potential which is a real function.
In view of spectral theory, the solution can be given by
$\Psi(t,x)=e^{itH}\Psi(0,x)$ if $H$ is self-adjoint.
In this regard, spectral properties of
self-adjoint Schr\"odinger operators have been extensively studied since the beginning of quantum mechanics.

In this paper we are mainly concerned with the problem of finding conditions on a real periodic potential $V$
for which the spectrum of $H$ is purely absolutely continuous.
More generally, we will consider the following differential operator:
\begin{equation}\label{do}
DAD^T+V(x),\quad x\in\mathbb{R}^3,
\end{equation}
where $D=-i\nabla$ and $A=(a_{jk})$ is a symmetric, positive-definite $3\times3$ matrix with real constant entries.
Here, we are using $DAD^T$ to denote $\sum_{j,k=1}^3D_ja_{jk}D_k$,
and $V$ is a real periodic function which means that $V(x+e_j)=V(x)$
for some basis $\{e_j\}_{j=1}^3$ of $\mathbb{R}^3$.
Note that $DAD^T=-\Delta$ particularly when $A=I$ is the identity matrix,
and we may choose $e_1=2\pi(1,0,0)$, $e_2=2\pi(0,1,0)$ and $e_3=2\pi(0,0,1)$ by a change of variables.
Namely, $V$ is assumed to be periodic with respect to the lattice $(2\pi\mathbb{Z})^3$.

Let $\Omega=[0,2\pi)^3$ be a cell of the lattice, and for $N\geq0$
let $V_N(x)=V(x)$ if $|V(x)|>N$ and $V_N(x)=0$ if $|V(x)|\leq N$.
Let $\mathcal{F}$ be a function class equipped with the norm
$$\|f\|_{\mathcal{F}}:=\sup_{z\in\mathbb{R}^3,r>0}
\bigg(\int_{Q(z,r)}|f(x)|dx\bigg)^{-1}\int_{Q(z,r)}\int_{Q(z,r)}\frac{|f(x)f(y)|}{|x-y|}dxdy,$$
where $Q(z,r)$ denotes dyadic cubes in $\mathbb{R}^3$ centered at $z$ with side length $r$.
Now we will consider potentials $V\in L_{\textrm{loc}}^1{(\mathbb{R}^3)}$ such that
for a sufficiently small $\varepsilon>0$
\begin{equation}\label{ks}
\lim_{N\rightarrow\infty}\|V_N\chi_\Omega\|_{\mathcal{F}}<\varepsilon,
\end{equation}
where $\chi_\Omega$ is the characteristic function of the cell $\Omega$.
Note that the condition \eqref{ks} is equivalent to
$$\lim_{N\rightarrow\infty}\sup_{z\in\Omega,0<r<4\pi}
\bigg(\int_{Q(z,r)}|V_N(x)|dx\bigg)^{-1}\int_{Q(z,r)}\int_{Q(z,r)}\frac{|V_N(x)V_N(y)|}{|x-y|}dxdy<\varepsilon,$$
because $V_N$ is also periodic with respect to $(2\pi\mathbb{Z})^3$,
and $x,y$ is taking only in $\Omega$.

Let us now consider the following quadratic form to define the self-adjoint operator $DAD^T+V$:
\begin{equation}\label{qf}
q[f,g]=\int_{\mathbb{R}^3}\langle(Df)A,Dg\rangle+\langle Vf,g\rangle dx,\quad f,g\in C_0^1(\mathbb{R}^3),
\end{equation}
where $\langle\,\,,\,\rangle$ denotes the usual inner product in $\mathbb{C}$ or in $\mathbb{C}^3$.
Then we have

\begin{thm}\label{thm0}
Let $V\in L_{\textrm{loc}}^1{(\mathbb{R}^3)}$ be a real periodic function
with respect to the lattice $(2\pi\mathbb{Z})^3$.
If\, $V$ satisfies \eqref{ks},
then there exists a unique self-adjoint operator denoted by $DAD^T+V$ such that
$$q[f,g]=\int_{\mathbb{R}^3}\langle(DAD^T+V)f,g\rangle dx$$
for $f\in\textrm{Dom}\,[DAD^T+V]$ and $g\in H^1(\mathbb{R}^3)$.
Here, the domain of $DAD^T+V$ is
$$\textrm{Dom}\,[DAD^T+V]=\{f\in H^1(\mathbb{R}^3):(DAD^T+V)f\in L^2(\mathbb{R}^3)\}.$$
\end{thm}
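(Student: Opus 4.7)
The plan is to realize $DAD^T + V$ via the KLMN representation theorem for quadratic forms. Write $q = t + v$ with
\[t[f,g] := \int_{\mathbb{R}^3}\langle (Df)A, Dg\rangle\, dx, \qquad v[f,g] := \int_{\mathbb{R}^3} V f\bar g\, dx.\]
Positive-definiteness of $A$ gives $c_1\|Df\|_{L^2}^2 \le t[f,f] \le c_2\|Df\|_{L^2}^2$, so $t$ is a closed non-negative symmetric form on $L^2(\mathbb{R}^3)$ with form domain $H^1(\mathbb{R}^3)$. The main task is then to show that $v$ is \emph{infinitesimally $t$-bounded}: for every $\delta>0$ there exists $C_\delta>0$ with
\[\Big|\int_{\mathbb{R}^3} V|f|^2\,dx\Big| \;\le\; \delta\,\|Df\|_{L^2}^2 \,+\, C_\delta\,\|f\|_{L^2}^2, \qquad f\in C_0^1(\mathbb{R}^3).\]
Granted this, KLMN yields a unique semibounded self-adjoint operator $T$ with form domain $H^1(\mathbb{R}^3)$ and $q[f,g]=\langle Tf,g\rangle$ for $f\in\textrm{Dom}(T)$ and $g\in H^1(\mathbb{R}^3)$, which is the operator we will call $DAD^T+V$.

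The crux is the form bound above, and everything in the proof hinges on it. I would first split $V = V^N + V_N$ with $|V^N|\le N$; the bounded piece contributes at most $N\|f\|_{L^2}^2$, so matters reduce to $V_N$. Exploiting the $(2\pi\mathbb{Z})^3$-periodicity of $V_N$, one decomposes $\mathbb{R}^3$ into translates of the cell $\Omega$ and further into the dyadic cubes $Q(z,r)$ appearing in the definition of $\|\cdot\|_{\mathcal F}$. The goal is an inequality of the form
\[\int_{\mathbb{R}^3} |V_N||f|^2\,dx \;\le\; C\, \|V_N\chi_\Omega\|_{\mathcal F}\, \|Df\|_{L^2}^2 \,+\, B_N\,\|f\|_{L^2}^2,\]
after which \eqref{ks} forces the prefactor $C\|V_N\chi_\Omega\|_{\mathcal F}$ below any prescribed $\delta$ for $N$ large enough. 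The reason one can hope to prove such an estimate is that in $\mathbb{R}^3$ the kernel $|x-y|^{-1}$ is (a constant multiple of) the Green's function of $-\Delta$, so the double integral appearing in $\|\cdot\|_{\mathcal F}$ is exactly the kind of Riesz energy that interfaces with $\|Df\|_{L^2}^2$. The technical derivation I envisage is a Schur-type test on the bilinear form with weight $|V_N(x)V_N(y)|/|x-y|$, combined with a Fefferman--Phong-style dyadic argument at the scale of the cubes $Q(z,r)$; the normalization $(\int_Q |V_N|)^{-1}$ built into $\|\cdot\|_{\mathcal F}$ is precisely what makes the Schur test close up scale-invariantly.

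Once $T$ is in hand, identifying $\textrm{Dom}(T)$ with $\{f\in H^1(\mathbb{R}^3):(DAD^T+V)f\in L^2(\mathbb{R}^3)\}$ is routine. For $f\in H^1$ with $(DAD^T+V)f\in L^2$ in the distributional sense, integration by parts against test functions and density of $C_0^\infty$ in $H^1$ (justified by the form bound) gives $q[f,g]=\langle (DAD^T+V)f,g\rangle$ for all $g\in H^1$, so $f\in\textrm{Dom}(T)$ and $Tf=(DAD^T+V)f$. Conversely, if $f\in\textrm{Dom}(T)$ then testing $q[f,\cdot]=\langle Tf,\cdot\rangle$ against $g\in C_0^\infty$ identifies $Tf$ with the distribution $DAD^T f+Vf$, which is therefore in $L^2$.

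The hard part, and the only nontrivial obstacle, will be the cube-by-cube Schur/dyadic estimate converting smallness of $\|V_N\chi_\Omega\|_{\mathcal F}$ into smallness of the relative form bound; this is precisely the reason the tailored norm $\|\cdot\|_{\mathcal F}$ is introduced. Everything else---the KLMN step, the positive-definiteness bookkeeping for $A$, and the domain identification---is classical machinery.
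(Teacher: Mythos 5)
Your overall framework---KLMN, form bound, splitting $V$ into a bounded piece and the tail $V_N$, and exploiting periodicity to reduce to a single cell $\Omega$---matches the paper's strategy, and the domain identification at the end is the standard argument, correctly described. But the one step you yourself flag as ``the hard part'' is also the one where your sketch diverges from what actually makes the proof go, and I don't think it closes as written.

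First, a small overstatement: you claim infinitesimal $t$-boundedness (``for every $\delta>0$ there exists $C_\delta$''). The hypothesis \eqref{ks} only says $\lim_{N\to\infty}\|V_N\chi_\Omega\|_{\mathcal F}<\varepsilon$ for one fixed small $\varepsilon$; the limit need not be zero, so the relative bound can only be driven below $C\varepsilon$, not below an arbitrary $\delta$. This is enough for KLMN once $C\varepsilon<1$, so it is not fatal, but the infinitesimal claim should be weakened.

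The real issue is the mechanism by which $\|V_N\chi_\Omega\|_{\mathcal F}$ controls $\int V_N|\psi|^2$. You propose a Schur-type test on the bilinear form with weight $|V_N(x)V_N(y)|/|x-y|$ plus a Fefferman--Phong dyadic argument. That form is the Rollnik/Birman--Schwinger energy, and it is not the quantity you need to bound; I do not see a Schur test with that kernel producing $\int V_N|\psi|^2 \lesssim \|V_N\chi_\Omega\|_{\mathcal F}\,\|\nabla\psi\|_{L^2}^2 + \dots$, and you do not say how $\|\nabla\psi\|_{L^2}$ would enter the estimate at all. The actual route in the paper is a two-step argument that you are missing: first a Poincar\'e--Sobolev representation on the cell, $\bigl|\psi(y)-\tfrac{1}{|\Omega|}\int_\Omega\psi\bigr| \le C\,I_1(|\nabla\psi|\chi_\Omega)(y)$, which converts the zero-order quantity $\int V_N|\psi|^2$ into a weighted $L^2$ norm of the fractional integral $I_1(|\nabla\psi|)$; and then the Kerman--Sawyer theorem, which says precisely that $\|I_1 f\|_{L^2(w)}\le C_w\|f\|_{L^2}$ with optimal $C_w\sim\|w\|_{\mathcal F}^{1/2}$, where $\|\cdot\|_{\mathcal F}$ is the exact two-weight testing quantity in \eqref{ks}. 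That theorem is the raison d'\^etre of the norm $\|\cdot\|_{\mathcal F}$, and without invoking it (or reproving it) the ``dyadic Schur test closes scale-invariantly'' claim remains a hope rather than an argument. In short: replace the Schur/Rollnik heuristic with the Poincar\'e inequality followed by the Kerman--Sawyer trace inequality, and the proof goes through.
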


Let $\mathcal{M}^p(\mathbb{R}^n)$ denote the Morrey class of functions $f$ on $\mathbb{R}^n$
which is defined by the norm
\begin{equation*}
\|f\|_{\mathcal{M}^{p}(\mathbb{R}^n)}:=\sup_{x\in\mathbb{R}^n,r>0}|Q|^{2/n}
\bigg(\frac1{|Q|}\int_{Q(x,r)}|f(y)|^pdy\bigg)^{1/p}<\infty,\quad1\leq p\leq n/2.
\end{equation*}
Note that $\mathcal{M}^{n/2}(\mathbb{R}^n)=L^{n/2}(\mathbb{R}^n)$ and $1/|x|^2\in L^{n/2,\infty}(\mathbb{R}^n)\subset\mathcal{M}^{p}(\mathbb{R}^n)$ if $p<n/2$.
In \cite{Sh3}, the self-adjointness of \eqref{do} was shown for $V\in\mathcal{M}^p(\mathbb{R}^n)$ with $p>(n-1)/2$, $n\geq3$, under the smallness assumption
\begin{equation}\label{assum2}
\limsup_{r\rightarrow0}\|V\|_{\mathcal{M}^{p}(\mathbb{R}^n)}<\varepsilon
\end{equation}
which implies (see Lemma 2.7 in \cite{Sh3})
\begin{equation}\label{fp}
\lim_{N\rightarrow\infty}\|V_N\chi_\Omega\|_{\mathcal{M}^{p}(\mathbb{R}^n)}<\varepsilon.
\end{equation}

\begin{rem}\label{rem}
Here we shall explain that \eqref{fp} with $n=3$ implies \eqref{ks}.
Thus Theorem 1.1 improves the one of \cite{Sh3} in three dimensions.
In fact, our motivation for the function class $\mathcal{F}$ stemmed from the characterization
of the weighted $L^2$ inequalities
\begin{equation}\label{wi}
\|I_1f\|_{L^2(w)}\leq C_w\|f\|_{L^2},
\end{equation}
where $I_\alpha$ denotes the fractional integral operator of order $0<\alpha<n$:
\begin{equation}\label{fi}
I_\alpha f(x)=\int_{\mathbb{R}^n}\frac{f(y)}{|x-y|^{n-\alpha}}dy.
\end{equation}
The class $\mathcal{F}$ and the characterization of \eqref{wi} has been recently used
in various problems concerning Schr\"odinger operators and equations (\cite{BBRV,S,S2,S3}).
As is well known from \cite{F}, \eqref{wi} holds for $w\in\mathcal{M}^p(\mathbb{R}^n)$
with $C_w\sim\|w\|_{\mathcal{M}^{p}(\mathbb{R}^n)}^{1/2}$ if $p>1$.
But here, we note that the least constant $C_w$ which allows \eqref{wi} with $n=3$
may be taken to be a constant multiple of $\|w\|_{\mathcal{F}}^{1/2}$
(see Lemma \ref{lem} for more details).
It is therefore clear that \eqref{fp} with $n=3$ implies \eqref{ks}.
\end{rem}

Now we turn to the absolute continuity.
We first need to set up more notation.
A weight $w:\mathbb{R}^{3}\rightarrow[0,\infty]$ is said to be of Muckenhoupt $A_2(\mathbb{R}^{3})$ class
({\it cf. \cite{G}})
if there is a constant $C_{A_2}$ such that
\begin{equation*}
\sup_{Q\text{ cubes in }\mathbb{R}^{3}}
\bigg(\frac1{|Q|}\int_Qw(x)dx\bigg)\bigg(\frac1{|Q|}\int_Qw(x)^{-1}dx\bigg)<C_{A_2}.
\end{equation*}
Note that $w\in A_2\,\,\Leftrightarrow\,\, w^{-1}\in A_2$.
Given $v\in\mathbb{R}^{3}$, one can write for $x\in\mathbb{R}^{3}$, $x=\lambda v+\widetilde{x}$,
where $\lambda\in\mathbb{R}$ and $\widetilde{x}$ is in some hyperplane $\mathcal{P}$
whose normal vector is $v$.
We shall denote by $w\in A_p(v)$ to mean that $w$ is in the $A_2$ class
in one-dimensional direction of the vector $v$
if the function $w_{\widetilde{x}}(\lambda):=w(x)$ is in $A_2(\mathbb{R})$ with $C_{A_2}$
uniformly in almost every $\widetilde{x}\in\mathcal{P}$.
By translation and rotation, this notion can be reduced to the case
where $v=(0,0,1)\in\mathbb{R}^{3}$ and $\mathcal{P}=\mathbb{R}^{2}$.
In this case, $w\in A_2(v)$ means that $w(x_1,x_2,\cdot)\in A_2(\mathbb{R})$
with respect to the variable $x_{3}$ uniformly for $\widetilde{x}=(x_1,x_2)\in\mathbb{R}^{2}$.
Also, $w\in A_2(v)$ is trivially satisfied
if $w$ is in a more restrictive $A_2(\mathbb{R}^{3})$ class defined over rectangles instead of cubes
(see Lemma 2.2 in \cite{Ku}).

This one-dimensional $A_2$ condition was already appeared in the study of
unique continuation problems ({\it cf. \cite{CR,S2}}),
and will be needed here for our resolvent estimates in Proposition \ref{prop}
which is a key ingredient in the proof of the following theorem which gives a new condition
concerning the absolute continuity:

\begin{thm}\label{thm}
Let $V\in L_{\textrm{loc}}^1{(\mathbb{R}^3)}$ be a real periodic function
with respect to the lattice $(2\pi\mathbb{Z})^3$.
If\, $V$ satisfies the conditions \eqref{ks} and $|V|\in A_2(v)$ for some $v\in\mathbb{R}^3$,
then the spectrum of $DAD^T+V(x)$ is purely absolutely continuous.
\end{thm}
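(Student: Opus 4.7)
The plan is to combine the Floquet--Bloch decomposition with the Thomas analytic-continuation trick, executed in the weighted setting made available by the hypotheses. First, I decompose $H:=DAD^{T}+V$ via the Floquet transform into a direct integral
\[
H \;\cong\; \int^{\oplus}_{\mathbb{T}^{\ast}} H(k)\,dk,\qquad H(k)=(D+k)A(D+k)^{T}+V,
\]
acting on $L^{2}(\mathbb{T}^{3})$ with $k$ ranging over the dual cell. Each fibre $H(k)$ is self-adjoint: the quadratic-form argument of Theorem~\ref{thm0} localises to the torus because $V$ is periodic and the cell-wise condition \eqref{ks} is exactly what is needed. By the standard criterion of Thomas, absolute continuity of $\sigma(H)$ follows once one shows that for every $\lambda\in\mathbb{R}$ the set $\{k:\lambda\in\sigma_{pp}(H(k))\}$ has Lebesgue measure zero. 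Since the family $k\mapsto H(k)$ is analytic of type~(A) when continued to complex quasi-momenta, this reduces, by a connectedness argument, to producing a single complex $k+i\tau v$ for which $H(k+i\tau v)-\lambda$ is boundedly invertible on $L^{2}(\mathbb{T}^{3})$.

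Second, I reduce this invertibility to a weighted resolvent estimate. Writing
\[
H(k+i\tau v)-\lambda = \bigl(H_{0}(k+i\tau v)-\lambda\bigr)\bigl(I+(H_{0}(k+i\tau v)-\lambda)^{-1}V\bigr),
\]
it suffices to make the second factor invertible by Neumann series. Since $V$ is only in the class $\mathcal{F}$ and need not be a Kato-type perturbation, I factorise $V=|V|^{1/2}\cdot\mathrm{sgn}(V)\,|V|^{1/2}$ and reduce matters to bounding
\[
\bigl\||V|^{1/2}\bigl(H_{0}(k+i\tau v)-\lambda\bigr)^{-1}|V|^{1/2}\bigr\|_{L^{2}(\mathbb{T}^{3})\to L^{2}(\mathbb{T}^{3})}.
\]
By Proposition~\ref{prop}, this norm is controlled by a constant multiple of $\lim_{N\to\infty}\|V_{N}\chi_{\Omega}\|_{\mathcal{F}}$, which by \eqref{ks} can be made strictly less than $1$ provided $\varepsilon$ is small enough; the Neumann series then closes and the contradiction with the assumption $\lambda\in\sigma_{pp}(H(k))$ on a positive-measure set is complete.

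The hypothesis $|V|\in A_{2}(v)$ is what makes Proposition~\ref{prop} viable: when one performs the Thomas shift in the direction $v$, the complex characteristic variety of the symbol of $H_{0}(k+i\tau v)-\lambda$ degenerates along a hyperplane perpendicular to $v$, and inverting near that variety produces an essentially one-dimensional singular integral in the $v$-direction, whose weighted $L^{2}$ boundedness is governed precisely by the one-dimensional $A_{2}$ condition (exactly as in the unique-continuation theory of \cite{CR,S2}). Technically, I would decompose Fourier-dyadically in the distance to that variety, and combine a spectral-projection estimate on the torus with the oscillatory-integral theorem advertised in the abstract, to produce bounds that are uniform in $\tau$.

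The main obstacle is Proposition~\ref{prop} itself: producing a weighted resolvent bound which is uniform in the large shift parameter $\tau$, with the Morrey-type $\mathcal{F}$-norm replacing the classical Kato--Morrey smallness and with the one-dimensional $A_{2}(v)$ weight absorbing the worst direction of the shift, and doing so with an explicit constant small enough to close the Neumann series. Granted this proposition, the Floquet--Thomas scheme outlined above delivers Theorem~\ref{thm} in a standard way.
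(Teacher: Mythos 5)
Your skeleton is the right one and matches the paper's: Floquet reduction to fibre operators on $L^2(\mathbb{T}^3)$, Thomas analytic continuation along a complex quasi-momentum $k+i\tau v$, and a weighted resolvent bound (Proposition~\ref{prop}) proved via dyadic decomposition, a spectral-projection estimate, and the weighted oscillatory-integral lemma. But there are two concrete gaps in how you propose to close the argument.

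First, the factorisation $V=|V|^{1/2}\,\mathrm{sgn}(V)\,|V|^{1/2}$ with $w=|V|$ does not yield a small constant from Proposition~\ref{prop}. The resolvent estimate comes with the factor $\|w\|_{\mathcal{F}(\mathbb{T}^3)}$, and for $w=|V|$ this is $\|V\chi_\Omega\|_{\mathcal{F}}$, which the hypothesis \eqref{ks} does \emph{not} assume small --- only the tails $\|V_N\chi_\Omega\|_{\mathcal{F}}$ are small. (Also $w=|V|$ may vanish on a set, degenerating $L^2(w^{-1})$.) The paper instead works with the auxiliary weight $w=|V_N|+f_\delta$, where $f_\delta(x)=\delta/|x|^2$ on $\Omega$ is periodised; this $w$ is bounded below, its $\mathcal{F}$-norm is $O(\varepsilon+\delta)$ by \eqref{sump}, and the $V_N$-part of the potential is absorbed via Proposition~\ref{prop} applied with this $w$ (see \eqref{47}--\eqref{comm}).

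Second, after the split $V=V_N+(V-V_N)$, the bounded piece $V-V_N$ has $L^\infty$-norm up to $N$, and $N$ must be taken \emph{large} to make $\|V_N\chi_\Omega\|_{\mathcal{F}}$ small. Your Neumann series has no way to absorb a term of size $N$; smallness of $\varepsilon$ is simply not enough. The missing ingredient is the blow-up estimate the paper imports from Theorem~3.13 of \cite{Sh3}:
\begin{equation*}
c_\rho\, c_w^{1/2}\,\|\psi\|_{L^2(\mathbb{T}^3,w^{-1}dx)} \le \|w\|_{L^1(\mathbb{T}^3)}^{1/2}\,\|H_0(\delta_0+i\rho)\psi\|_{L^2(\mathbb{T}^3,w^{-1}dx)},\qquad c_\rho\to\infty\ \text{as}\ |\rho|\to\infty.
\end{equation*}
Combined with the resolvent bound absorbing $V_N$, this forces $c_\rho\le C(N+|E|)$ for a putative common eigenvalue $E$, which is contradictory for large $\rho$. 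Without an estimate of this kind --- where the free resolvent gains as the imaginary quasi-momentum grows --- the $N$-contribution cannot be killed and the argument does not close. In short: the $\varepsilon$-smallness handles only the large-$|V|$ tail, while the large-$\rho$ asymptotics handle the bounded bulk; your proposal supplies only the first of these two mechanisms.

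Incidentally, the paper also does not phrase the endgame as Neumann series plus a measure-zero argument in $k$; it uses the cleaner criterion of Lemma~\ref{keylem} (no common eigenvalue of $\{H_V(\lambda):\lambda\in\mathbb{C}\}$) and derives a contradiction directly from the estimates on a putative eigenfunction $\psi_\rho$. That difference is cosmetic; the two gaps above are not.
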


Making use of resolvent estimates for a family of operators $(D+k)^2+V$, $k\in\mathbb{C}^n$,
Thomas \cite{T} showed that the spectrum of the Schr\"odinger operator is purely absolutely continuous
if $V\in L_{\textrm{loc}}^2(\mathbb{R}^3)$.
Based on this approach, the absolute continuity of periodic operators has been extensively studied
by many authors (\cite{RS,D,HH,HH2,BS,BS2,BS3,M,So,Sh,Sh2,Sh3,SZ}).
Among others,
Shen \cite{Sh} established the absolute continuity of \eqref{do} for $V\in L_{\textrm{loc}}^{n/2}(\mathbb{R}^n)$, $n\geq3$,
which is best possible in the context of $L^p$ potentials.
This result was later extended by himself \cite{Sh3} to the Morrey class
$\mathcal{M}^p(\mathbb{R}^n)$ with $p>(n-1)/2$, $n\geq3$,
under the smallness assumption \eqref{assum2}.
See also \cite{Sh2} for the Kato class in three dimensions.

Finally, we would like to emphasize that the class $\mathcal{F}$
contains the global Kato ($\mathcal{K}$) and Rollnik ($\mathcal{R}$) classes which are defined by
\begin{equation*}
\|f\|_{\mathcal{K}}:=
\sup_{x\in\mathbb{R}^3}\int_{\mathbb{R}^3}\frac{|f(y)|}{|x-y|}dy<\infty
\end{equation*}
and
\begin{equation*}
\|f\|_{\mathcal{R}}:=
\int_{\mathbb{R}^3}\int_{\mathbb{R}^3}\frac{|f(x)f(y)|}{|x-y|^2}dxdy<\infty,
\end{equation*}
respectively.
These are fundamental ones in spectral and scattering theory ({\it cf. \cite{K,Si}}),
and their usefulness was recently revealed in the work of Rodnianski and Schlag \cite{RoS}
concerning dispersive properties of Schr\"odinger equations.

\medskip

The rest of the paper is organized as follows.
In Section \ref{sec2}, we prove Theorem \ref{thm0} which says that
the self-adjoint operator $DAD^T+V$ can be defined by a quadratic form under the condition \eqref{ks}.
Based on the Thomas approach, we make use of the weighted $L^2$ resolvent estimates in Proposition \ref{prop} to prove Theorem \ref{thm} in Section \ref{sec3}.
Sections \ref{sec4} and \ref{sec5} are devoted to proving Proposition \ref{prop}.
The key ingredient in the proof is the weighted $L^2$ spectral projection estimates of Proposition \ref{sam}
in Section \ref{sec4}.
These estimates will be shown in Section \ref{sec5} by using Lemma \ref{lem5} which is a weighted version of an oscillatory integral theorem of Stein.

\medskip

From now on, we will use the letter $C$ for positive constants that may be different at each occurrence.


\section{Self-adjointness}\label{sec2}

In this section we prove Theorem \ref{thm0}.
Namely, we will show that the operator $DAD^T+V$ is self-adjoint
under the condition \eqref{ks} on a real periodic $V$.

Let $\psi\in C_0^1(\mathbb{R}^3)$.
From the definition of $V_N$, we see that
\begin{equation}\label{1}
\int_{\mathbb{R}^3}|\psi|^2|V|dx\leq\int_{\mathbb{R}^3}|\psi|^2|V_N|dx+N\int_{\mathbb{R}^3}|\psi|^2dx.
\end{equation}
Now we claim that
\begin{equation}\label{claim}
\int_{\mathbb{R}^3}|\psi|^2|V_N|dx\leq
C\|V_N\chi_\Omega\|_{\mathcal{F}}\int_{\mathbb{R}^3}|\nabla\psi|^2dx
+C\int_\Omega|V_N|dx\int_{\mathbb{R}^3}|\psi|^2dx.
\end{equation}
Assuming this, we get from \eqref{1} that
$$\int_{\mathbb{R}^3}|\psi|^2|V|dx\leq
C\|V_N\chi_\Omega\|_{\mathcal{F}}\int_{\mathbb{R}^3}|\nabla\psi|^2dx
+\bigg(C\int_\Omega|V_N|dx+N\bigg)\int_{\mathbb{R}^3}|\psi|^2dx.$$
Since $\langle A\nabla\psi,\nabla\psi\rangle\geq C|\nabla\psi|^2$ and $V\in L_{\textrm{loc}}^1{(\mathbb{R}^3)}$,
by the condition \eqref{ks}, we conclude that
\begin{equation}\label{claim2}
\int_{\mathbb{R}^3}|\psi|^2|V|dx\leq
C\varepsilon\int_{\mathbb{R}^3}\langle A\nabla\psi,\nabla\psi\rangle dx+C_{N}\int_{\mathbb{R}^3}|\psi|^2dx
\end{equation}
if $N$ is sufficiently large.
Hence, if $\varepsilon$ is small enough so that $C\varepsilon<1/2$,
then \eqref{claim2} clearly implies that the symmetric quadratic form $q$ given in \eqref{qf}
is semi-bounded from below and closable on $H^1(\mathbb{R}^3)$.
Thus, it defines a unique self-adjoint operator, which we denote by $DAD^T+V$,
such that
$$q[f,g]=\int_{\mathbb{R}^3}\langle(DAD^T+V)f,g\rangle dx$$
for $f\in\textrm{Dom}\,[DAD^T+V]$ and $g\in H^1(\mathbb{R}^3)$.
Also,
$$\textrm{Dom}\,[DAD^T+V]=\{f\in H^1(\mathbb{R}^3):(DAD^T+V)f\in L^2(\mathbb{R}^3)\}.$$

Now it remains to show the claim \eqref{claim}.
For this, we first note that
$$\bigg|\psi(y)-\frac1{|\Omega|}\int_\Omega\psi(x)dx\bigg|
\leq CI_1(|\nabla\psi|\chi_\Omega)(y),$$
where $I_1$ is the fractional integral operator of order $1$ given in \eqref{fi}
(see Lemma 7.16 in \cite{GT}).
Then, by using this, $(a^{1/2}+b^{1/2})^2\leq2(a+b)$, and H\"older's inequality,
it is not difficult to see that
\begin{equation}\label{2}
\int_\Omega|\psi|^2|V_N|dx\leq
C\int_{\mathbb{R}^3}|I_1(|\nabla\psi|\chi_\Omega)|^2|V_N|\chi_\Omega dx
+C\int_\Omega|V_N|dx\int_\Omega|\psi|^2dx.
\end{equation}
Now we will use the following lemma,
which characterizes weighted $L^2$ inequalities for the fractional integral operator $I_1$,
due to Kerman and Sawyer \cite{KS} (see Theorem 2.3 there and also Lemma 2.1 in \cite{BBRV}):

\begin{lem}\label{lem}
Let $w$ be a nonnegative measurable function on $\mathbb{R}^3$.
Then there exists a constant $C_w$ depending on $w$ such that
\begin{equation}\label{ee}
\|I_1f\|_{L^2(w)}\leq C_w\|f\|_{L^2}
\end{equation}
for all measurable functions $f$ on $\mathbb{R}^3$
if and only if
\begin{equation*}
\|w\|_{\mathcal{F}}:=\sup_{Q}\bigg(\int_Qw(x)dx\bigg)^{-1}\int_Q\int_Q\frac{w(x)w(y)}{|x-y|}dxdy<\infty.
\end{equation*}
Here the sup is taken over all dyadic cubes $Q$ in $\mathbb{R}^3$,
and the constant $C_w$ may be taken to be a constant multiple of $\|w\|_{\mathcal{F}}^{1/2}$.
\end{lem}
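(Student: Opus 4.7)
The plan is to rewrite inequality \eqref{ee} as a bilinear form estimate against the symmetric kernel $|y-z|^{-1}$, for which $\|w\|_{\mathcal{F}}<\infty$ is literally a testing condition on dyadic cubes, and then to establish the characterization by duality in one direction and a Sawyer-type stopping-time argument in the other. Since $I_1$ is formally self-adjoint, \eqref{ee} is equivalent by duality to $\|I_1(gw)\|_{L^2(dx)}\le C_w\|g\|_{L^2(w)}$ for all $g$. Squaring and using the Riesz-potential composition identity
\begin{equation*}
\int_{\mathbb{R}^3}\frac{dx}{|x-y|^{2}\,|x-z|^{2}}=\frac{C_0}{|y-z|}
\end{equation*}
(which encodes $I_1\ast I_1\propto I_2$ in dimension three) converts this into the bilinear inequality
\begin{equation*}
B(g,g):=\int\!\!\int\frac{g(y)g(z)\,w(y)w(z)}{|y-z|}\,dy\,dz\le \widetilde{C}_w^{\,2}\int g^{2}w\,dx,
\end{equation*}
with $\widetilde{C}_w\sim C_w$.

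Necessity is then immediate from testing $B$ on $g=\chi_Q$: the left side becomes exactly $\int_Q\!\!\int_Q w(y)w(z)|y-z|^{-1}dy\,dz$ and the right side is $\widetilde{C}_w^{\,2}w(Q)$, so dividing and taking the supremum over dyadic $Q$ gives $\|w\|_{\mathcal{F}}^{1/2}\le \widetilde{C}_w\sim C_w$.

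For sufficiency, assume $\|w\|_{\mathcal{F}}<\infty$ and fix $g\ge0$. I would decompose the kernel by dyadic annuli, $|y-z|^{-1}\sim\sum_{k}2^{k}\chi_{\{|y-z|\sim 2^{-k}\}}$, and note that on each annulus every pair $(y,z)$ sits in a single cube $Q$ of sidelength $\sim 2^{-k}$ chosen from a finite family of shifted dyadic grids. This majorizes $B(g,g)$ by a sum of the form $\sum_Q\ell(Q)^{-1}\bigl(\int_Q gw\bigr)^{2}$ over those grids. To estimate this without losing a logarithmic factor in the number of scales, I would use a Sawyer-style principal-cubes decomposition: select stopping cubes $P$ on which the weighted average $w(P)^{-1}\int_P gw$ doubles, so that the sets $E(P)=P\setminus\bigcup_{P'\,\text{child}}P'$ are pairwise disjoint with $w(E(P))\ge\tfrac12 w(P)$. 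Grouping each $Q$ under its principal ancestor and using the testing condition $\|w\|_{\mathcal{F}}$ to dominate $\sum_{Q\prec P}w(Q)^{2}/\ell(Q)$ by $C\|w\|_{\mathcal{F}}\,w(P)$ (via the lower bound $\int_P\!\int_P w(y)w(z)|y-z|^{-1}dy\,dz\gtrsim\sum_{Q\prec P}w(Q)^{2}/\ell(Q)$ on each principal group), the remaining outer sum over $P$ is controlled by a weighted maximal function integrated against $w$, hence by $\int g^{2}w$.

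The main obstacle is precisely this sufficiency step: a crude scale-by-scale bound loses a factor in $k$, so the principal-cubes (or an equivalent stopping-time) mechanism is essential to convert the dyadic testing condition into a global $L^{2}(w)$ bound. A conceptually cleaner alternative is to invoke Sawyer's general two-weight theorem for the fractional integral $I_2$ with common weight $w$: the bilinear inequality above says exactly that $g\mapsto I_2(gw)$ is bounded on $L^{2}(w)$, and $\|w\|_{\mathcal{F}}<\infty$ is, modulo the symmetry of the kernel, the associated Sawyer testing condition. The fact that the supremum in $\|w\|_{\mathcal{F}}$ runs only over dyadic (axis-aligned) cubes is harmless, since a bounded number of shifted dyadic grids covers every ball by dyadic cubes of comparable size.
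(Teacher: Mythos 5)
The paper gives no proof of this lemma; it states the result and cites Kerman--Sawyer \cite{KS} (Theorem 2.3) and \cite{BBRV}. What you have written is essentially a reconstruction of the Kerman--Sawyer argument, and the outline is correct: the reduction to $\|I_1(gw)\|_{L^2}\le C_w\|g\|_{L^2(w)}$ by self-adjointness of $I_1$, the identity $|x|^{-2}*|x|^{-2}=C_0|x|^{-1}$ in $\mathbb{R}^3$ which turns the squared norm into the symmetric bilinear form $B(g,g)$, the derivation of necessity by testing on $g=\chi_Q$, and the shifted-grid annular decomposition together with a principal-cubes stopping time for sufficiency are all the right steps. One point you assert without justification is the bound
$\sum_{Q\subset P}w(Q)^2/\ell(Q)\le C\int_P\int_P w(y)w(z)|y-z|^{-1}\,dy\,dz$;
it holds because, after writing $w(Q)^2=\int_Q\int_Q w(y)w(z)\,dy\,dz$ and interchanging the sum with the double integral, the dyadic cubes $Q$ with $y,z\in Q\subset P$ form a geometric chain whose reciprocal sidelengths sum to at most a constant times $|y-z|^{-1}$ --- and this is precisely what avoids the logarithmic loss you correctly flag as the main danger. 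Two further points worth making explicit: the principal-cubes construction needs $0<w(Q)<\infty$, which you can arrange by replacing $w$ with $w+\varepsilon/|x|^2$ and letting $\varepsilon\to0$ (you mention this device but only in passing); and the final step uses the $L^2(w)$-boundedness of the weighted dyadic maximal operator, which holds for every nonnegative locally integrable $w$ and should be named. Your closing alternative --- invoking Sawyer's two-weight theorem for $I_2$ with the two testing conditions collapsing to one by the symmetry of the kernel and the coincidence of the two weights --- is also correct, and is arguably the most economical way to present the sufficiency direction.
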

Applying this lemma to the first term in the right-hand side of \eqref{2}, we see that
\begin{equation}\label{22}
\int_{\mathbb{R}^3}|I_1(|\nabla\psi|\chi_\Omega)|^2|V_N|\chi_\Omega dx
\leq C\|V_N\chi_\Omega\|_{\mathcal{F}}\int_\Omega|\nabla\psi|^2dx.
\end{equation}
Combining \eqref{2} and \eqref{22}, we now get
\begin{equation}\label{90}
\int_\Omega|\psi|^2|V_N|dx\leq
C\|V_N\chi_\Omega\|_{\mathcal{F}}\int_\Omega|\nabla\psi|^2dx
+C\int_\Omega|V_N|dx\int_\Omega|\psi|^2dx
\end{equation}
which readily implies the claim \eqref{claim} because $V_N$ is periodic
with respect to the cell $\Omega$.

\section{Absolute continuity}\label{sec3}

In this section we prove Theorem \ref{thm}.
For $k\in\mathbb{C}^3$,
we need to define the operator $(D+k)A(D+k)^T+V$ on $L^2(\mathbb{T}^3)$,
where $\mathbb{T}^3=\mathbb{R}^3/(2\pi\mathbb{Z})^3\approx[0,2\pi)^3=\Omega$.
For this, we first let
$$H^1(\mathbb{T}^3)=\bigg\{\psi\in L^2(\Omega):\psi(x)=\sum_{\textbf{n}\in\mathbb{Z}^3}a_\textbf{n}e^{i\textbf{n}\cdot x}\text{ and }
\sum_{\textbf{n}\in\mathbb{Z}^3}|\textbf{n}|^2|a_\textbf{n}|^2<\infty\bigg\}.$$
Let us then consider the following quadratic form depending on $k$:
\begin{equation}\label{form}
q(k)[\phi,\psi]=\int_{\Omega}\langle[(D+k)\phi]A,(D+\overline{k})\psi\rangle+\langle V\phi,\psi\rangle dx,
\end{equation}
where $\phi,\psi\in H^1(\mathbb{T}^3)$ and $\overline{k}$ denotes the conjugate of $k$.
Now we observe that
\begin{equation}\label{99}
\int_{\Omega}|\psi|^2|V|dx\leq
C\varepsilon\int_{\Omega}\langle A\nabla\psi,\nabla\psi\rangle dx+C_{N}\int_{\Omega}|\psi|^2dx
\end{equation}
if $N$ is large.
Indeed, from the definition of $V_N$,
\begin{equation*}
\int_\Omega|\psi|^2|V|dx\leq\int_\Omega|\psi|^2|V_N|dx+N\int_\Omega|\psi|^2dx.
\end{equation*}
So, \eqref{99} follows from combining this, \eqref{90} and \eqref{ks}.
If we choose $\varepsilon$ in \eqref{99} small enough so that $C\varepsilon<1/2$,
then this implies that the quadratic form $q(k)$ is strictly m-sectorial.
Thus, there exists a unique closed operator, which we denote by $(D+k)A(D+k)^T+V$,
such that
$$q(k)[\phi,\psi]=\int_{\Omega}\langle[(D+k)A(D+k)^T+V]\phi,\psi\rangle dx$$
for $\phi\in\text{Dom}[(D+k)A(D+k)^T+V]$ and $\psi\in H^1(\mathbb{T}^3)$.
(See \cite{K2} for details.)
Also,
\begin{align*}
\text{Dom}[(D+k)A(D+k)^T+V]&=\{\phi\in H^1(\mathbb{T}^3):[(D+k)A(D+k)^T+V]\phi\in L^2(\mathbb{T}^3)\}\\
&=\{\phi\in H^1(\mathbb{T}^3):(DAD^T+V)\phi\in L^2(\mathbb{T}^3)\}
\end{align*}
is independent of $k$.

Next we choose $\textbf{a}=(a_1,a_2,a_3)\in\mathbb{R}^3$ such that
\begin{equation}\label{7}
|\textbf{a}|=1\quad\text{and}\quad \textbf{a}A=(s_0,0,0),\, s_0>0,
\end{equation}
and let
\begin{equation}\label{8}
L=\{\textbf{b}=(b_1,b_2,b_3)\in\mathbb{R}^3:|\textbf{b}|<\sqrt{3}\text{ and } \langle\textbf{b},\textbf{a}\rangle=0\}.
\end{equation}
For a fixed $\textbf{b}\in L$, let us now consider a family of operators
$$H_V(\lambda)=(D+\lambda \textbf{a}+\textbf{b})A(D+\lambda \textbf{a}+\textbf{b})^T+V,\quad \lambda\in\mathbb{C},$$
defined by the quadratic form \eqref{form}.
Then the following lemma is standard (see Propositions 4.5 and 3.9 in \cite{Sh2} and \cite{Sh3}, respectively).

\begin{lem}\label{keylem}
If, for every $b\in L$, the family of operators $\{H_V(\lambda):\lambda\in\mathbb{C}\}$
has no common eigenvalue, then the spectrum of the self-adjoint operator $DAD^T+V$
is purely absolutely continuous.
\end{lem}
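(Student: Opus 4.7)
The plan is the standard Floquet--Bloch / Thomas direct-integral approach, which is the method indicated by the references to Shen's papers. First, I would invoke the Gelfand--Floquet transform to decompose
$$DAD^T+V \;\cong\; \int^{\oplus}_{[0,1)^3} H_V(k)\,dk,$$
so that the spectrum of $DAD^T+V$ on $L^2(\mathbb{R}^3)$ is the union of the ranges of the band functions $k\mapsto\lambda_n(k)$, where $\lambda_1(k)\le\lambda_2(k)\le\cdots\to\infty$ are the eigenvalues of $H_V(k)$ on $L^2(\mathbb{T}^3)$. The compact embedding $H^1(\mathbb{T}^3)\hookrightarrow L^2(\mathbb{T}^3)$, combined with the semiboundedness from \eqref{99}, gives each $H_V(k)$ a compact resolvent, which legitimizes the band description. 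The standard criterion (tracing back to \cite{T}) is that $DAD^T+V$ is purely absolutely continuous as soon as no band $\lambda_n$ is constant on a subset of $[0,1)^3$ of positive Lebesgue measure.

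Second, I would pass to complex Floquet parameters. The form $q(k)$ in \eqref{form} depends polynomially of degree two on $k$, and for $k\in\mathbb{C}^3$ the first-order cross terms can be absorbed, via Cauchy--Schwarz, into $\varepsilon\int_\Omega\langle A\nabla\psi,\nabla\psi\rangle dx+C(k,\varepsilon)\|\psi\|_{L^2}^2$. Combined with \eqref{99}, this yields sectorial estimates on $q(k)$ uniform for $k$ on compact subsets of $\mathbb{C}^3$, so $\{H_V(k)\}_{k\in\mathbb{C}^3}$ is a self-adjoint analytic family of type (B) in Kato's sense. Restricting to the line $k=\lambda\mathbf{a}+\mathbf{b}$ with $\mathbf{b}\in L$ fixed, Kato's theory produces analytic functions $\mu_m(\lambda)$, $m\ge1$, whose values collectively parametrize the eigenvalues of $H_V(\lambda)$; each $\mu_m$ is real-analytic on $\mathbb{R}$ and extends holomorphically to $\lambda\in\mathbb{C}$.

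Third, a slicing / pigeonhole argument yields the contradiction. Suppose the spectrum fails to be purely absolutely continuous, so that some band equals a fixed constant $\mu$ on a set $E\subset[0,1)^3$ of positive Lebesgue measure. Writing $\mathbb{R}^3=\mathbb{R}\mathbf{a}\oplus\mathbf{a}^\perp$ and applying Fubini, a positive-measure set of offsets $\mathbf{b}\in\mathbf{a}^\perp$ admits a one-dimensional positive-measure slice of $E$; since the diameter of $[0,1)^3$ is $\sqrt{3}$, these offsets all lie in $L$. For such a $\mathbf{b}$, $\mu$ is an eigenvalue of $H_V(\lambda\mathbf{a}+\mathbf{b})$ for a positive-measure set of $\lambda\in\mathbb{R}$; by the countable labeling of analytic branches, some single $\mu_m$ must satisfy $\mu_m(\lambda)=\mu$ on a set of positive measure, hence, by the identity theorem, $\mu_m\equiv\mu$ on all of $\mathbb{C}$. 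Thus $\mu$ is a common eigenvalue of $\{H_V(\lambda):\lambda\in\mathbb{C}\}$ for this $\mathbf{b}$, contradicting the hypothesis.

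The main obstacle is the technical verification, under the weak regularity of $V$ permitted by \eqref{ks}, that $\{H_V(k)\}$ is genuinely an analytic family of type (B); this is precisely where the uniform form bound \eqref{99} and Lemma \ref{lem} are indispensable, as they furnish the sectorial control on $q(k)$ needed to invoke Kato's machinery. Once this is in place, the remainder of the argument is the standard Thomas slicing / identity-theorem step, and the absolute continuity criterion for direct integrals with non-constant analytic band functions is classical.
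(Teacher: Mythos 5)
The paper does not actually prove this lemma: it is stated as ``standard'' and referred to Propositions 4.5 in \cite{Sh2} and 3.9 in \cite{Sh3}. Your proposal reconstructs the Thomas/Floquet--Bloch argument underlying those citations, and the overall architecture --- direct-integral decomposition, compact resolvent of each fiber, analytic family of type (B), the Fubini slicing over offsets $\mathbf{b}\in L$ (the verification that slices of $[0,1)^3$ land inside $L$, using $|\mathbf{b}|\le|k|<\sqrt{3}$, is fine) --- is the right one.

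There is, however, one real gap in the last step. You assert that each Rellich eigenvalue branch $\mu_m$ ``extends holomorphically to $\lambda\in\mathbb{C}$'' and then conclude $\mu_m\equiv\mu$ on all of $\mathbb{C}$ by the identity theorem. For a self-adjoint analytic family of type (B), Rellich's theorem guarantees that the eigenvalues can be globally rearranged into real-analytic functions of the \emph{real} parameter $\lambda$; but these branches need not extend to single-valued entire functions of complex $\lambda$, because eigenvalue branch points (exceptional points) can occur off the real axis. So the identity theorem, applied as you invoke it, does not reach all of $\mathbb{C}$. What does work: from Rellich plus the identity theorem for real-analytic functions, you get that $\mu$ is an eigenvalue of $H_V(\lambda\mathbf{a}+\mathbf{b})$ for \emph{every real} $\lambda$. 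To pass to all complex $\lambda$, apply the analytic Fredholm theorem to the family of compact operators $\lambda\mapsto (H_V(\lambda\mathbf{a}+\mathbf{b})+c)^{-1}$ (compactness holds by the $H^1(\mathbb{T}^3)\hookrightarrow L^2(\mathbb{T}^3)$ embedding, and analyticity by the type (B) property): the set of $\lambda\in\mathbb{C}$ for which $\mu$ is an eigenvalue is then either discrete or all of $\mathbb{C}$, and since it contains the whole real line it must be all of $\mathbb{C}$. With this substitution, your argument is complete and agrees with the cited sources.
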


To prove Theorem \ref{thm}, by this lemma
we only need to show that $\{H_V(\lambda):\lambda\in\mathbb{C}\}$ has no common eigenvalue.
For this, let us first consider $\lambda=\delta_0+i\rho$, where
$$\delta_0=\frac1{a_1}(\frac12-b_1).$$
Here $a_1$ and $b_1$ are given in \eqref{7} and \eqref{8}, respectively.
Since $\langle \textbf{a}A,\textbf{a}\rangle=a_1s_0$ and $|\textbf{a}|=1$,
$a_1\neq0$.
From now on, we will show that $\{H_V(\delta_0+i\rho):\rho\in\mathbb{R}\}$ has no common eigenvalue.
This will be based on the following weighted $L^2$ resolvent estimates
which will be obtained in the next section.

\begin{prop}\label{prop}
Let $w\in L^1(\mathbb{T}^3)$ and $w\in A_2(v)$ for some $v\in\mathbb{R}^3$.
Assume that $w\geq c_w$ for some constant $c_w>0$ and
\begin{equation*}
\|w\|_{\mathcal{F}(\mathbb{T}^3)}:=\sup_{z\in\Omega,0<r<4\pi}
\bigg(\int_{Q(z,r)}w(x)dx\bigg)^{-1}\int_{Q(z,r)}\int_{Q(z,r)}\frac{w(x)w(y)}{|x-y|}dxdy<\infty.
\end{equation*}
Then, if $\psi\in H^1(\mathbb{T}^3)$ and $H_0(\delta_0+i\rho)\psi\in L^2(\mathbb{T}^3,w^{-1}dx)$,
we have for $|\rho|\geq1$
\begin{equation}\label{resol}
\|\psi\|_{L^2(\mathbb{T}^3,wdx)}\leq C\|w\|_{\mathcal{F}(\mathbb{T}^3)}\|H_0(\delta_0+i\rho)\psi\|_{L^2(\mathbb{T}^3,w^{-1}dx)},
\end{equation}
where $C$ is a constant independent of $\rho$ and $c_w$.
\end{prop}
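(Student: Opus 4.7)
\medskip

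\noindent\textbf{Proof proposal.} The plan is to follow the Thomas strategy: diagonalize $H_0(\delta_0+i\rho)$ via Fourier series on $\mathbb{T}^3$, peel off a baseline estimate coming from the lower bound on the imaginary part of the symbol, and then upgrade to the weighted bound by dyadically decomposing along level sets of the symbol and invoking the weighted spectral projection estimates of Proposition \ref{sam}.

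First, for $\psi(x)=\sum_{\mathbf{n}\in\mathbb{Z}^3}a_\mathbf{n} e^{i\mathbf{n}\cdot x}\in H^1(\mathbb{T}^3)$, a direct computation using $\textbf{a}A=(s_0,0,0)$ and $A=A^T$ gives
\begin{equation*}
H_0(\delta_0+i\rho)\psi(x)=\sum_{\mathbf{n}\in\mathbb{Z}^3}\sigma(\mathbf{n},\rho)\,a_\mathbf{n} e^{i\mathbf{n}\cdot x},
\end{equation*}
with $\sigma(\mathbf{n},\rho)=Q(\xi_\mathbf{n})-\rho^2 s_0 a_1+2i\rho s_0(n_1+\tfrac12)$, where $Q(\xi):=\xi A\xi^T$ and $\xi_\mathbf{n}:=\mathbf{n}+\delta_0\textbf{a}+\textbf{b}$; the linear-in-$\rho$ imaginary part comes out this clean precisely because of the choice $\delta_0 a_1+b_1=1/2$. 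The key elementary observation is that $|n_1+1/2|\ge 1/2$ for every integer $n_1$, so
\begin{equation*}
|\operatorname{Im}\sigma(\mathbf{n},\rho)|\ge |\rho|\,s_0,\qquad \mathbf{n}\in\mathbb{Z}^3,
\end{equation*}
which automatically gives the unweighted resolvent bound $\|\psi\|_{L^2(\mathbb{T}^3)}\le (|\rho|s_0)^{-1}\|H_0(\delta_0+i\rho)\psi\|_{L^2(\mathbb{T}^3)}$. The role of the hypotheses $w\in L^1(\mathbb{T}^3)$ and $w\ge c_w>0$ is only to guarantee that $\psi\in L^2(\mathbb{T}^3,w\,dx)$ is a meaningful object and that $L^2(\mathbb{T}^3,w^{-1}dx)\supset L^2(\mathbb{T}^3)$; the constant $C$ in \eqref{resol} must be produced without reference to $c_w$.

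Second, set $f:=H_0(\delta_0+i\rho)\psi$ and perform a dyadic decomposition of $\mathbb{Z}^3$ according to the size of the symbol: let
\begin{equation*}
\Lambda_j:=\bigl\{\mathbf{n}\in\mathbb{Z}^3:\,2^j|\rho|s_0\le|\sigma(\mathbf{n},\rho)|<2^{j+1}|\rho|s_0\bigr\},\qquad j\ge 0,
\end{equation*}
and let $P_j$ denote Fourier projection onto $\Lambda_j$. Write $\psi=\sum_{j\ge 0}\psi_j$ where $\widehat{\psi_j}(\mathbf{n})=\sigma(\mathbf{n},\rho)^{-1}\widehat{P_jf}(\mathbf{n})$; by the triangle inequality on $L^2(\mathbb{T}^3,w\,dx)$ the problem reduces to bounding each $\|\psi_j\|_{L^2(w\,dx)}$ and summing a geometric series in~$j$.

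Third, for each $j$ the frequency set $\Lambda_j$ is a thin neighborhood of the ellipsoidal level set $\{Q(\xi)=\rho^2 s_0 a_1\}$ intersected with a slab $\{|n_1+\tfrac12|\sim 2^j\}$, and the denominator $\sigma(\mathbf{n},\rho)$ is of constant size $\sim 2^j|\rho|$ throughout $\Lambda_j$. On this set I would apply the weighted spectral projection estimate of Proposition \ref{sam}, in a bilinear form
\begin{equation*}
|\langle P_j g,\psi_j\rangle|\le C\,(2^j|\rho|)^{-1}\|w\|_{\mathcal{F}(\mathbb{T}^3)}\,\|g\|_{L^2(w^{-1}dx)}\|P_jf\|_{L^2(w^{-1}dx)},
\end{equation*}
tested against arbitrary $g\in L^2(w^{-1}dx)$; together with a suitable $T T^*$ reformulation this yields $\|\psi_j\|_{L^2(w\,dx)}\le C\,2^{-j}\|w\|_{\mathcal{F}(\mathbb{T}^3)}\|P_jf\|_{L^2(w^{-1}dx)}$. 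Summing in $j\ge 0$ and using Bessel's inequality $\sum_j\|P_jf\|_{L^2(w^{-1})}^2\le\|f\|_{L^2(w^{-1})}^2$ gives \eqref{resol}.

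The main obstacle is Step~3: producing a constant that is exactly $C\|w\|_{\mathcal{F}(\mathbb{T}^3)}$ (not a higher power) and independent of both $\rho$ and $c_w$, together with genuine $2^{-j}$ decay in the dyadic parameter. This forces one to organize the decomposition so that on each $\Lambda_j$ the quadratic factor $Q(\xi_\mathbf{n})-\rho^2 s_0 a_1$ and the linear factor $2\rho s_0(n_1+\tfrac12)$ are simultaneously of size $\sim 2^j|\rho|$, and to apply the weighted $L^2$ spectral projection estimate in its sharpest form. This is precisely where the Muckenhoupt hypothesis $|V|\in A_2(v)$ (passed through to $w$) will be used: it controls the one-dimensional Hilbert-transform-type losses in the direction $v$ that arise when the ellipsoidal shell $\{Q(\xi_\mathbf{n})\sim\rho^2\}$ is frozen in the transverse directions, exactly as in the unique continuation arguments of \cite{CR,S2}.
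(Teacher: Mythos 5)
Your proposal contains a genuine gap that is not cosmetic: the summation strategy does not work in weighted $L^2$, and the place where the hypothesis $w\in A_2(v)$ is actually consumed in the paper is precisely the step you have skipped.

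You decompose $\psi=\sum_{j}\psi_j$ with $\widehat{\psi_j}$ supported on $\Lambda_j=\{\mathbf{n}:|\sigma(\mathbf{n},\rho)|\sim 2^j|\rho|\}$ and propose to close the argument by the triangle inequality on $L^2(w\,dx)$ together with ``Bessel's inequality'' $\sum_j\|P_jf\|_{L^2(w^{-1})}^2\le\|f\|_{L^2(w^{-1})}^2$. But the Fourier projections $P_j$ onto the disjoint sets $\Lambda_j$ are orthogonal in the \emph{unweighted} $L^2(\mathbb{T}^3)$, not in $L^2(w^{-1}dx)$; there is no Bessel inequality for such projections in a weighted $L^2$ space, and indeed for a general weight $w\ge c_w>0$ with $\|w\|_{\mathcal F(\mathbb{T}^3)}<\infty$ one cannot control $\sum_j\|P_jf\|_{L^2(w^{-1})}^2$ by $\|f\|_{L^2(w^{-1})}^2$. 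Similarly, on the output side, the pointwise bound $|\psi(x)|\le\sum_j|\psi_j(x)|$ followed by the triangle inequality loses a square function; you would need $\|\psi\|_{L^2(w)}\lesssim\|(\sum_j|\psi_j|^2)^{1/2}\|_{L^2(w)}$, which is a weighted Littlewood--Paley inequality and requires $w\in A_2$ in the direction along which the decomposition is performed. Your decomposition is along the level sets of the \emph{symbol}, which is not a one-dimensional dyadic decomposition, so the weighted Littlewood--Paley theorem of Kurtz (cited in the paper) does not apply to it. This is exactly where the paper diverges: the paper decomposes $\psi$ dyadically in the single frequency variable $n_1$ (after rotating so that $v=e_1$), which is a genuine one-dimensional dyadic decomposition; the condition $w\in A_2(v)$ is then used, on both sides, to invoke the weighted Littlewood--Paley square function estimate and reduce matters to a bound uniform in $j$. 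Your sketch never uses $A_2(v)$ concretely --- the final sentence about ``Hilbert-transform-type losses'' gestures at the right phenomenon but does not repair the Bessel step.

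There is a second, independent difficulty. Proposition~\ref{sam} estimates projections onto unit-width shells $\{|\mathbf{n}B|\in[k-1,k)\}$, not onto your sets $\Lambda_j$, which are annuli of width $\sim 2^j$ about the characteristic sphere intersected with a slab in $n_1$. Covering $\Lambda_j$ by $\sim 2^j$ unit shells and applying Proposition~\ref{sam} shell by shell does not yield the geometric decay you need, because the shell estimates are not square-summable in a weighted space either. The paper instead handles the resolvent of the constant-coefficient operator $(D+\mathbf{b})A(D+\mathbf{b})^T+z_j$ (with $z_j$ a \emph{fixed} complex number chosen so that $\operatorname{Im}z_j\sim\rho\,2^j$) by an entirely different mechanism: an analytic family of operators $S_\xi$, an explicit kernel bound of the form $|G_\xi(x)|\lesssim 1+\sum 1/|x+2\pi\mathbf{n}|$ for $\operatorname{Re}\xi=1$, and the weighted fractional-integral estimate $\|I_2 f\|_{L^2(w)}\le C\|w\|_{\mathcal F}\|f\|_{L^2(w^{-1})}$ coming from Lemma~\ref{lem}. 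Proposition~\ref{sam} and its dual are used only for the \emph{error} term obtained by replacing the true symbol by $|(\mathbf{n}+\mathbf{b})B|^2+z_j$; there the multiplier gains a summable weight over shells $M$ by estimate (5.10) of \cite{Sh}, not a geometric decay in a symbol-size parameter. Your sketch collapses these two structurally different pieces into one spectral-projection estimate, which is where the argument would break.

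\end{document}
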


\begin{rem}
The estimate \eqref{resol} is a uniform Sobolev inequality on the torus $\mathbb{T}^3$
for the second-order elliptic operator $H_0(\delta_0+i\rho)$.
Similar inequalities were obtained in the setting of $\mathbb{R}^n$ by
many authors (\cite{KRS,CS,CR,S2}) to study unique continuation properties of differential operators.
Also, \eqref{resol} was shown in \cite{Sh3} for $w$ in the Morrey class.
\end{rem}

Now we suppose that $E$ is an eigenvalue for $H_V(\lambda)$ for all $\lambda\in\mathbb{C}$.
Then there exists $\psi_\rho\in\text{Dom}[H_V(\lambda)]$ particularly for $\lambda=\delta_0+i\rho$
such that $\|\psi_\rho\|_{L^2(\mathbb{T}^3)}=1$
and $H_V(\lambda)\psi_\rho=E\psi_\rho$.
By the condition \eqref{ks}, we can choose $N$ so large that
\begin{equation*}
\sup_{z\in\Omega,0<r<4\pi}
\bigg(\int_{Q(z,r)}|V_N(x)|dx\bigg)^{-1}\int_{Q(z,r)}\int_{Q(z,r)}\frac{|V_N(x)V_N(y)|}{|x-y|}dxdy<\varepsilon.
\end{equation*}
Let us now consider $w(x)=|V_N(x)|+f_\delta(x)$,
where $f_\delta$ is a periodic function with respect to $(2\pi\mathbb{Z})^3$,
which is given by $f_\delta(x)=\delta/|x|^2$ with $\delta>0$ for $x\in\Omega$.
Then, $w$ is periodic with respect to $(2\pi\mathbb{Z})^3$ and $w\geq\widetilde{c}>0$ for some constant $\widetilde{c}$.
Recall from Remark \ref{rem} that
$\|f_\delta\chi_{\Omega}\|_{\mathcal{F}}\leq C\|f_\delta\chi_{\Omega}\|_{\mathcal{M}^p(\mathbb{R}^3)}\leq C\delta$
and that
the norm $\|\cdot\|_{\mathcal{F}}$ is the least bound for
$$\int|I_1f(x)|^2w(x)dx\leq C\|w\|_{\mathcal{F}}\int|f(x)|^2dx.$$
It is now clear that
\begin{equation}\label{sump}
\|w_1+w_2\|_{\mathcal{F}}\leq C(\|w_1\|_{\mathcal{F}}+\|w_2\|_{\mathcal{F}}).
\end{equation}
Since $\|w\|_{\mathcal{F}(\mathbb{T}^3)}=C\|w\chi_{\Omega}\|_{\mathcal{F}}$
(see the paragraph below \eqref{ks}),
from \eqref{sump} we can take $\delta$ small enough so that
\begin{equation}\label{ks22}
\sup_{z\in\Omega,0<r<4\pi}
\bigg(\int_{Q(z,r)}w(x)dx\bigg)^{-1}\int_{Q(z,r)}\int_{Q(z,r)}\frac{w(x)w(y)}{|x-y|}dxdy<\varepsilon.
\end{equation}
Using Lemma \ref{lem} together with \eqref{ks22}, we now get
\begin{align}\label{47}
\nonumber\int_\Omega|V\psi_\rho|^2w^{-1}dx&\leq N^2\int_{\Omega\cap\{V(x)\leq N\}}|\psi_\rho|^2w^{-1}dx
+\int_{\Omega\cap\{V(x)>N\}}|\psi_\rho|^2|V_N|^2w^{-1}dx\\
&\leq N^2\int_{\Omega}|\psi_\rho|^2w^{-1}dx
+\int_{\Omega}|\psi_\rho|^2wdx\\
\nonumber&\leq N^2\widetilde{c}^{-1}\int_{\Omega}|\psi_\rho|^2dx
+C\varepsilon\int_{\Omega}|\nabla\psi_\rho|^2dx.
\end{align}
Hence, $V\psi_\rho\in L^2(\mathbb{T}^3,w^{-1}dx)$ because $\psi_\rho\in H^1(\mathbb{T}^3)$.
Also, $\psi_\rho\in L^2(\mathbb{T}^3,w^{-1}dx)$ since $w\geq\widetilde{c}>0$.
Since $H_V(\lambda)\psi_\rho=E\psi_\rho$, we now conclude that
$$H_0(\delta_0+i\rho)\psi_\rho=E\psi_\rho-V\psi_\rho
\in L^2(\mathbb{T}^3,w^{-1}dx),$$
and from \eqref{47}
\begin{equation}\label{45}
\|H_0(\delta_0+i\rho)\psi_\rho\|_{L^2(\mathbb{T}^3,w^{-1}dx)}
\leq(N+|E|)\|\psi_\rho\|_{L^2(\mathbb{T}^3,w^{-1}dx)}+\|\psi_\rho\|_{L^2(\mathbb{T}^3,wdx)}.
\end{equation}
On the other hand, by Proposition \ref{prop} with \eqref{ks22}, we see that for $\rho\geq1$
\begin{equation*}
\|\psi_\rho\|_{L^2(\mathbb{T}^3,wdx)}
\leq C\varepsilon
\|H_0(\delta_0+i\rho)\psi_\rho\|_{L^2(\mathbb{T}^3,w^{-1}dx)}.
\end{equation*}
So if $\varepsilon$ is chosen so small that $C\varepsilon\leq1/2$, from \eqref{45} we have
\begin{equation}\label{comm}
\|H_0(\delta_0+i\rho)\psi_\rho\|_{L^2(\mathbb{T}^3,w^{-1}dx)}
\leq2(N+|E|)\|\psi_\rho\|_{L^2(\mathbb{T}^3,w^{-1}dx)}.
\end{equation}
Now we recall from Theorem 3.13 in \cite{Sh3} that
there exists $c_\rho>0$ such that $c_\rho\rightarrow\infty$ as $|\rho|\rightarrow\infty$ and
$$c_\rho c_w^{1/2}\|\psi\|_{L^2(\mathbb{T}^3,w^{-1}dx)}\leq
\|w\|_{L^1(\mathbb{T}^3)}^{1/2}\|H_0(\delta_0+i\rho)\psi\|_{L^2(\mathbb{T}^3,w^{-1}dx)}$$
if $\psi\in H^1(\mathbb{T}^3)$, $H_0(\delta_0+i\rho)\psi\in L^2(\mathbb{T}^3,w^{-1}dx)$,
and $w\in L^1(\mathbb{T}^3)$ with $w\geq c_w>0$.
By combining this and \eqref{comm}, it follows that
\begin{equation*}
c_\rho c_w^{1/2}\leq\|w\|_{L^1(\mathbb{T}^3)}^{1/2}2(N+|E|)<\infty.
\end{equation*}
This leads to a contradiction
since $c_\rho\rightarrow\infty$ as $\rho\rightarrow\infty$.
Thus $\{H_V(z):z\in\mathbb{C}\}$ has no common eigenvalue,
and so Theorem \ref{thm} is proved by Lemma \ref{keylem}.

\section{Weighted $L^2$ resolvent estimates}\label{sec4}

This section is devoted to proving Proposition \ref{prop}.
By an elementary rotation argument, we may first assume that
$w\in A_2(\mathbb{R})$ in the $x_1$ variable uniformly in other variables $x'=(x_2,x_3)\in\mathbb{T}^2$.
Then we need to show that for $\psi\in L^2(\mathbb{T}^3,w^{-1}dx)$,
\begin{equation}\label{resol2}
\|H_0(\delta_0+i\rho)^{-1}\psi\|_{L^2(\mathbb{T}^3,wdx)}\leq C\|w\|_{\mathcal{F}(\mathbb{T}^3)}\|\psi\|_{L^2(\mathbb{T}^3,w^{-1}dx)},
\end{equation}
where
$$H_0(\delta_0+i\rho)^{-1}\psi(x)=\sum_{\textbf{n}=(n_1,n_2,n_3)\in\mathbb{Z}^3}\frac{\widehat{\psi}(\textbf{n})e^{i\textbf{n}\cdot x}}{(\textbf{n}+k)A(\textbf{n}+k)^T}$$
and $k=(\delta_0+i\rho)\textbf{a}+\textbf{b}$ with
$\textbf{a},\textbf{b}$ given as in \eqref{7},\eqref{8}.
To show \eqref{resol2}, we first decompose $\psi$ as
$\psi=\sum_{j=-\infty}^\infty\psi_j$,
where
$$\psi_j=\sum_{n_1\in[2^{j-1},2^j-1]}\widehat{\psi}(\textbf{n})e^{i\textbf{n}\cdot x}
\quad\text{for}\quad j\geq1,$$
$$\psi_j=\sum_{n_1\in[-2^{-j}+1,-2^{-j-1}]}\widehat{\psi}(\textbf{n})e^{i\textbf{n}\cdot x}
\quad\text{for}\quad j\leq-1,$$
and
$$\psi_0=\sum_{n_1=0}\widehat{\psi}(\textbf{n})e^{i\textbf{n}\cdot x}.$$
Then by the Littlewood-Paley theory on $\mathbb{T}^1$ in weighted $L^2$ spaces
(see Chap. XV in \cite{Z} and also \cite{Ku}),
it is enough to show \eqref{resol2} for $\psi_j$ uniformly in $j$.
Indeed, if we have
\begin{equation}\label{resol3}
\|H_0(\delta_0+i\rho)^{-1}\psi_j\|_{L^2(\mathbb{T}^3,wdx)}\leq C\|w\|_{\mathcal{F}(\mathbb{T}^3)}\|\psi_j\|_{L^2(\mathbb{T}^3,w^{-1}dx)}
\end{equation}
uniformly in $j$,
then by the Littlewood-Paley theory and the condition $w\in A_2(\mathbb{R})$, we get
\begin{align*}
\|H_0(\delta_0+i\rho)^{-1}\psi\|_{L^2(\mathbb{T}^3,wdx)}
&\leq C\bigg\|\bigg(\sum_{j\in\mathbb{Z}}
|H_0(\delta_0+i\rho)^{-1}\psi_j|^2\bigg)^{1/2}\bigg\|_{L^2(\mathbb{T}^3,wdx)}\\
&\leq C\|w\|_{\mathcal{F}(\mathbb{T}^3)}
\bigg\|\bigg(\sum_{j\in\mathbb{Z}}|\psi_j|^2\bigg)^{1/2}\bigg\|_{L^2(\mathbb{T}^3,w^{-1}dx)}\\
&\leq C\|w\|_{\mathcal{F}(\mathbb{T}^3)}\|\psi\|_{L^2(\mathbb{T}^3,w^{-1}dx)}
\end{align*}
as desired.

From now on, we will show \eqref{resol3} only for the case $j\geq1$
because the other case $j\leq0$ can be shown in the same way.
First we recall from (6.5) in \cite{Sh3} (see also (3.5) in \cite{Sh}) that
$$(\textbf{n}+k)A(\textbf{n}+k)^T
=|(\textbf{n}+\textbf{b})B|^2+2\delta_0(n_1+b_1)s_0+(\delta_0^2-\rho^2)a_1s_0+2i\rho(n_1+\frac12)s_0$$
where $B$ is a $3\times3$ symmetric, positive definite matrix such that $A=B^2$ (i.e., $B=\sqrt{A}$).
In fact, this follows easily from \eqref{7} and \eqref{8}.
Fix $j\geq1$. In view of the fact that $n_1+1/2\sim2^j$, we let
$z_j=-\rho^2a_1s_0+2i\rho2^js_0$,
and we consider the following operator
$$((D+\textbf{b})A(D+\textbf{b})^T+z_j)^{-1}\psi=
\sum_{\textbf{n}\in\mathbb{Z}^3}\frac{\widehat{\psi}(\textbf{n})e^{i\textbf{n}\cdot x}}{|(\textbf{n}+\textbf{b})B|^2+z_j}.$$
Now we will show that
\begin{equation}\label{resol23}
\bigg\|\sum_{\textbf{n}\in\mathbb{Z}^3}\frac{\widehat{\psi_j}(\textbf{n})e^{i\textbf{n}\cdot x}}{|(\textbf{n}+\textbf{b})B|^2+z_j}\bigg\|_{L^2(\mathbb{T}^3,wdx)}\leq C\|w\|_{\mathcal{F}(\mathbb{T}^3)}\|\psi_j\|_{L^2(\mathbb{T}^3,w^{-1}dx)}
\end{equation}
and
\begin{align}\label{spli}
\nonumber\bigg\|H_0(\delta_0+\rho)^{-1}\psi_j-
\sum_{\textbf{n}\in\mathbb{Z}^3}&\frac{\widehat{\psi_j}(\textbf{n})e^{i\textbf{n}\cdot x}}
{|(\textbf{n}+\textbf{b})B|^2+z_j}\bigg\|_{L^2(\mathbb{T}^3,wdx)}\\
&\qquad\qquad\leq C\|w\|_{\mathcal{F}(\mathbb{T}^3)}\|\psi_j\|_{L^2(\mathbb{T}^3,w^{-1}dx)}.
\end{align}
Then the desired estimate \eqref{resol3} follows directly from combining \eqref{resol23} and \eqref{spli},
and so the proof of Proposition \ref{prop} is completed.

To show the first estimate \eqref{resol23}, we consider the family of operators
\begin{equation*}
S_\xi\psi(x)=
\sum_{\textbf{n}\in\mathbb{Z}^3}\frac{\widehat{\psi_j}(\textbf{n})
e^{i\textbf{n}\cdot x}}{[(\textbf{n}+\textbf{b})A(\textbf{n}+\textbf{b})^T+z_j]^\xi},\quad\xi\in\mathbb{C},
\end{equation*}
where
\begin{align*}
\text{Re}\sqrt{z_j}=|z_j|^{1/2}\cos\Big(\frac12\arg(z_j)\Big)
&\geq\frac12\frac{|\text{Im}z_j|}{|z_j|^{1/2}}\\
&\geq\frac{c|\rho|2^j}{|\rho|+\sqrt{|\rho|2^j}}\\
&\geq c\min(2^j,\sqrt{|\rho|2^j})\geq c_0>0.
\end{align*}
Then we have
$$S_\xi\psi(x)=\int_\Omega G_\xi(x-y)\psi(y)dy$$
and the integral kernel $G_\xi$ of $S_\xi$ satisfies
$$|G_\xi(x)|\leq Ce^{c|\text{Im}\,\xi|}\bigg(1+\sum_{|x+2\pi\textbf{n}|\leq C}\frac{1}{|x+2\pi\textbf{n}|}\bigg)$$
for $\text{Re}\,\xi=1$. See (6.10) in \cite{Sh}.
It follows now that
\begin{align*}
|S_\xi\psi(x)|&\leq Ce^{c|\text{Im}\,\xi|}
\bigg(\int_\Omega|\psi(y)|dy+\int_\Omega\sum_{|x-y+2\pi\textbf{n}|\leq C}\frac{\psi(y)}{|x-y+2\pi\textbf{n}|}dy\bigg)\\
&\leq Ce^{c|\text{Im}\,\xi|}
\bigg(\int_\Omega|\psi(y)|dy+I_2(|\psi|\chi_{\Omega'})(x)\bigg),
\end{align*}
where $\Omega'=\bigcup_{|\textbf{n}|\leq C}(\Omega+2\pi\textbf{n})$
and $I_2$ is the fractional integral operator of order $2$ given in \eqref{fi}.
Here, for the last inequality we used the fact that $\psi$ is periodic.
Hence, for $\text{Re}\,\xi=1$
\begin{align}\label{35}
\nonumber|w^{\xi/2}S_\xi&(w^{\xi/2}\psi)(x)|\\
\nonumber\leq C&e^{c|\text{Im}\,\xi|}
\bigg(w(x)^{1/2}\int_\Omega|\psi(y)|w(y)^{1/2}dy+w(x)^{1/2}I_2(|\psi|\chi_{\Omega'}w^{1/2})(x)\bigg)\\
:=C&e^{c|\text{Im}\,\xi|}(I+II).
\end{align}
Then, using H\"older's inequality, we can bound the first term in the right-hand side of \eqref{35} as
\begin{align*}
\|I\|_{L^2(\Omega,dx)}&\leq
\int_\Omega|\psi(y)|w(y)^{1/2}dy\bigg(\int_\Omega w(x)dx\bigg)^{1/2}\\
&\leq\|\psi\|_{L^2(\Omega)}\int_\Omega w(x)dx\\
&\leq C\|\chi_\Omega w\|_{\mathcal{F}}\|\psi\|_{L^2(\Omega)}\\
&\leq C\|w\|_{\mathcal{F}(\mathbb{T}^3)}\|\psi\|_{L^2(\Omega)}.
\end{align*}
Here, for the last inequality we used the fact that $w$ is periodic.
Since $1/|x-y|\geq c>0$ for $x,y\in\Omega$, we see here that
\begin{align*}
c\int_\Omega w(x)dx
&\leq\bigg(\int_{\Omega}w(x)dx\bigg)^{-1}\int_{\Omega}\int_{\Omega}\frac{w(x)w(y)}{|x-y|}dxdy\\
&\leq
\sup_{z\in\mathbb{R}^3,r>0}\bigg(\int_{Q(z,r)\cap\Omega}w(x)dx\bigg)^{-1}
\int_{Q(z,r)\cap\Omega}\int_{Q(z,r)\cap\Omega}\frac{w(x)w(y)}{|x-y|}dxdy\\
&=\|\chi_\Omega w\|_{\mathcal{F}}.
\end{align*}
On the other hand, for the second term we will use the following estimate
\begin{equation}\label{ee3}
\|I_2f\|_{L^2(w)}\leq C\|w\|_{\mathcal{F}}\|f\|_{L^2(w^{-1})},
\end{equation}
which follows by combining \eqref{ee} in Lemma \ref{lem} and its dual estimate
\begin{equation*}
\|I_1f\|_{L^2}\leq C_w\|f\|_{L^2(w^{-1})}
\end{equation*}
since $I_2=I_1I_1$.
Using \eqref{ee3}, we now see that
\begin{align*}\label{resol233}
\|II\|_{L^2(\Omega,dx)}
&\leq\|\chi_{\Omega'}w^{1/2}I_2(|\psi|\chi_{\Omega'}w^{1/2})\|_{L^2}\\
&\leq C\|\chi_{\Omega'}w\|_{\mathcal{F}}\|\psi\chi_{\Omega'}\|_{L^2}\\
&\leq C\|w\|_{\mathcal{F}(\mathbb{T}^3)}\|\psi\|_{L^2(\Omega)}.
\end{align*}
Here, for the last inequality we used the fact that $\psi$ and $w$ are periodic.
Consequently, we get
\begin{equation*}
\|S_1\psi\|_{L^2(\Omega,wdx)}\leq C\|w\|_{\mathcal{F}(\mathbb{T}^3)}\|\psi\|_{L^2(\Omega,w^{-1}dx)}
\end{equation*}
and \eqref{resol23} is now proved.

It remains to show the second estimate \eqref{spli}.
First we write
\begin{align}\label{wer}
&H_0(\delta_0+\rho)^{-1}\psi_j-
\sum_{\textbf{n}\in\mathbb{Z}^3}\frac{\widehat{\psi_j}(\textbf{n})e^{i\textbf{n}\cdot x}}
{|(\textbf{n}+\textbf{b})B|^2+z_j}\\
\nonumber&=\sum_{M=1}^\infty\sum_{\{\textbf{n}\in\mathbb{Z}^3:M-1\leq|\textbf{n}B|<M\}}
\frac{\widehat{\psi_j}(\textbf{n})e^{i\textbf{n}\cdot x}[|(\textbf{n}+\textbf{b})B|^2+z_j-(\textbf{n}+k)A(\textbf{n}+k)^T]}
{[(\textbf{n}+k)A(\textbf{n}+k)^T][|(\textbf{n}+\textbf{b})B|^2+z_j]}.
\end{align}
We then consider the second-order elliptic operator
$DAD^T$ on the torus $[0,2\pi)^3\approx\mathbb{R}^3/(2\pi\mathbb{Z})^3$
which has a complete set of eigenfunctions $\{e^{i\textbf{n}\cdot x}:\textbf{n}\in\mathbb{Z}^3\}$
with the corresponding eigenvalues $\{\textbf{n}A\textbf{n}^T:\textbf{n}\in\mathbb{Z}^3\}$.
Hence,
$$P_M\psi=\sum_{\{\textbf{n}\in\mathbb{Z}^3:\textbf{n}A\textbf{n}^T\in[(M-1)^2,M^2)\}}
\widehat{\psi}(\textbf{n})e^{i\textbf{n}\cdot x}
=\sum_{\{\textbf{n}\in\mathbb{Z}^3:|\textbf{n}B|\in[M-1,M)\}}
\widehat{\psi}(\textbf{n})e^{i\textbf{n}\cdot x}$$
is the projection of $\psi$ to the subspace of $L^2(\Omega)$,
spanned by eigenfunctions with eigenvalues in $[(M-1)^2,M^2)$.
In view of this fact, the following estimate for the spectral projection,
obtained by Sogge \cite{Sog} (see Theorem 2.2 (i) there),
can be used to bound the right-hand side of \eqref{wer} in $L^2$ space:
For $\psi\in L^2(\Omega)$ and $1\leq p\leq4/3$,
\begin{equation}\label{discre}
\|P_M\psi\|_{L^2}\leq CM^{\frac12(\frac6p-4)}\|\psi\|_{L^p}.
\end{equation}
See the proof of Lemma 5.2 in \cite{Sh}.
This spectral projection estimate can be thought of as a discrete version
of the Fourier restriction estimate of Stein and Tomas \cite{To}.
A weighted version of \eqref{discre},
\begin{equation}\label{df}
\|P_M\psi\|_{L^2(\mathbb{T}^3,wdx)}\leq CM^{1/2}\|w\|_{\mathcal{M}^p(\mathbb{T}^3)}\|\psi\|_{L^2(\mathbb{T}^3)},\quad1<p\leq3/2,
\end{equation}
can be also found in Section 5 of \cite{Sh3} where it was used
in handling \eqref{wer} in the weighted $L^2$ space with the weight $w$
in the Morrey space $\mathcal{M}^p(\mathbb{R}^3)$.
Note that \eqref{df} is the analog of the weighted $L^2$ Fourier restriction estimate obtained in \cite{CS,CR}.
In our case, we need the following lemma which
can be viewed as an extension of \eqref{df} to the class $\mathcal{F}$
because $\mathcal{M}^p(\mathbb{R}^3)\subset\mathcal{F}$.

\begin{prop}\label{sam}
Let $w\in L^1(\mathbb{T}^3)$.
Assume that $w\geq c_w>0$ and $\|w\|_{\mathcal{F}(\mathbb{T}^3)}<\infty$.
Then one has
\begin{equation}\label{proj}
\bigg\|\sum_{|\textbf{n}B|\in[k-1,k)}\widehat{\psi}(\textbf{n})
e^{i\textbf{n}\cdot x}\bigg\|_{L^2(\mathbb{T}^3,wdx)}\leq Ck^{1/2}\|w\|_{\mathcal{F}(\mathbb{T}^3)}^{1/2}\|\psi\|_{L^2(\mathbb{T}^3)}
\end{equation}
for $\psi\in L^2(\mathbb{T}^3)$ and $k\geq1$.
Here, $B=\sqrt{A}\geq0$ and $C$ depends only on $A$.
\end{prop}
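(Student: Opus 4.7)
The plan is to prove Proposition \ref{sam} by a $TT^*$ reduction to a bilinear kernel estimate, a pointwise bound on the kernel of $P_k$ coming from the forthcoming weighted oscillatory integral theorem Lemma \ref{lem5}, and the weighted $L^2$ bound \eqref{ee3} for the fractional integral $I_2$ already obtained in Section \ref{sec4}.

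Since $P_k$ is an orthogonal projection on $L^2(\mathbb{T}^3)$ (hence self-adjoint and idempotent), one has $\|P_k\|_{L^2 \to L^2(w)}^2 = \|M_{w^{1/2}} P_k M_{w^{1/2}}\|_{L^2 \to L^2}$ by $TT^*$, where $M_{w^{1/2}}$ denotes multiplication by $w^{1/2}$. Writing the convolution kernel of $P_k$ as $K_k(x) = \sum_{|\textbf{n} B| \in [k-1, k)} e^{i\textbf{n}\cdot x}$, the desired bound \eqref{proj} is equivalent to the bilinear estimate
$$\bigg|\iint_{\Omega \times \Omega} K_k(x-y)\, w(x)^{1/2} w(y)^{1/2} f(x) \overline{g(y)}\, dx\, dy\bigg| \leq C k \|w\|_{\mathcal{F}(\mathbb{T}^3)} \|f\|_2 \|g\|_2.$$
I would then invoke Lemma \ref{lem5} to prove the pointwise kernel bound $|K_k(x)| \leq C k/|x|$ on a fundamental domain $\Omega$ (away from the origin). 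Heuristically, this is what one gets for the Euclidean analog from stationary phase on the $2$-sphere: since $|\widehat{d\sigma}(rx)| \lesssim (r|x|)^{-1}$, we obtain $|K^{\mathbb{R}^3}_k(x)| \lesssim |x|^{-1} \int_{k-1}^k r\, dr \sim k/|x|$. The discrete version on the torus, however, requires an oscillatory integral theorem of Stein type to handle the arithmetic irregularity of lattice points in the shell $\{|\textbf{n}B| \in [k-1, k)\}$, and this is exactly the role of Lemma \ref{lem5}.

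With the kernel bound $|K_k(x-y)| \leq Ck/|x-y|$ in hand, the bilinear integral above is dominated by $Ck \iint_{\Omega \times \Omega} |x-y|^{-1} w(x)^{1/2} w(y)^{1/2} |f(x)||g(y)|\, dx\, dy$. Since $|x-y|^{-1}$ is precisely the kernel of the fractional integral $I_2$ in $\mathbb{R}^3$, an application of Cauchy--Schwarz followed by the weighted estimate \eqref{ee3} yields $\leq Ck \|w\|_{\mathcal{F}(\mathbb{T}^3)} \|f\|_2 \|g\|_2$, completing the reduction. The main obstacle is the pointwise kernel bound in the middle step --- a bound whose continuous analog is almost trivial, but whose discrete counterpart on $\mathbb{T}^3$ is delicate enough to merit the weighted oscillatory integral machinery prepared in Section \ref{sec5}.
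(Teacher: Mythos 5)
Your $TT^*$ reduction and the final step via the fractional integral $I_2$ and \eqref{ee3} are sound, but the pivotal middle step---the pointwise bound $|K_k(x)|\leq Ck/|x|$ for the discrete exponential sum $K_k(x)=\sum_{|\textbf{n}B|\in[k-1,k)}e^{i\textbf{n}\cdot x}$---does not follow from Lemma \ref{lem5}. That lemma is a \emph{weighted $L^2\to L^2$ operator estimate} for continuous oscillatory integral operators with kernel $e^{i\lambda|x-y|}a(x,y)$ and a smooth compactly supported amplitude $a$; it gives no pointwise information about any particular kernel, and certainly not about a lattice exponential sum. Pointwise control on such discrete shell sums is a delicate number-theoretic matter (tied to lattice-point discrepancy in annuli, where arithmetic resonances prevent the full cancellation that stationary phase delivers in the continuous case), and there is no obvious route from an $L^2$ operator estimate to one. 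The fact that even the unweighted Sogge estimate \eqref{discre} on $\mathbb{T}^3$ is a genuine theorem, proved by a Green's-function/parametrix argument rather than via a pointwise kernel estimate, should be taken as a warning that this step is unavailable as stated.

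The paper sidesteps the lattice kernel entirely. It first records the elementary spectral fact $\|(DAD^T-k^2-2ik)\phi\|_{L^2(\mathbb{T}^3)}\leq Ck\|\psi\|_{L^2(\mathbb{T}^3)}$ for $\phi=P_k\psi$, which reduces \eqref{proj} to the uniform Sobolev inequality \eqref{sdf} for the \emph{continuous} operator $DAD^T-k^2-2ik$. Inequality \eqref{sdf} is then proved by localizing with cutoffs, representing the solution through the explicit Euclidean Bessel Green's function $F_{z,B}$ with $z=-(k+i)^2$ (which carries the oscillation $e^{i\sqrt{z}|x-y|}$, $|\sqrt{z}|\sim k$, explicitly), and only at that point invoking Lemma \ref{lem5} on this continuous oscillatory kernel, together with \eqref{ee3} to absorb the $1/|x-y|$ singularity. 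In short, Lemma \ref{lem5} is designed to be applied to the Green's function of the differential operator, not to the spectral projection kernel; substituting the latter for the former is where your argument breaks down.
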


Using \eqref{wer}, Minkowski's inequality, and this proposition which will be shown in the next section,
the left-hand side of \eqref{spli} is now bounded by
\begin{align}\label{endd}
C\|w\|_{\mathcal{F}(\mathbb{T}^3)}^{1/2}&\sum_{M=1}^\infty M^{1/2}\\
\nonumber\times&\bigg\|\sum_{|\textbf{n}B|\in[M-1,M)}
\frac{\widehat{\psi_j}(\textbf{n})e^{i\textbf{n}\cdot x}[|(\textbf{n}+\textbf{b})B|^2+z_j-(\textbf{n}+k)A(\textbf{n}+k)^T]}
{[(\textbf{n}+k)A(\textbf{n}+k)^T][|(\textbf{n}+\textbf{b})B|^2+z_j]}\bigg\|_{L^2(\mathbb{T}^3)}.
\end{align}
Then we recall from (5.10) in \cite{Sh} that
\begin{align*}
\sum_{M=1}^\infty M&\sup\bigg|\frac{|(\textbf{n}+\textbf{b})B|^2+z_j-(\textbf{n}+k)A(\textbf{n}+k)^T}
{[(\textbf{n}+k)A(\textbf{n}+k)^T][|(\textbf{n}+\textbf{b})B|^2+z_j]}\bigg|\\
&\leq
\sum_{M=1}^\infty M\sup\frac{|\rho|2^j}
{\big(\big|\,|(\textbf{n}+\textbf{b})B|^2-\rho^2a_1s_0\big|+2^j|\rho|\big)^2}\leq C.
\end{align*}
Here, the sup is taken over all $\textbf{n}\in\mathbb{Z}^3$
such that $n_1\in[2^{j-1},2^j-1]$ and $|\textbf{n}B|\in[M-1,M)$.
Using this and the dual estimate
\begin{equation*}
\bigg\|\sum_{|\textbf{n}B|\in[k-1,k)}\widehat{\psi}(\textbf{n})
e^{i\textbf{n}\cdot x}\bigg\|_{L^2(\mathbb{T}^3)}\leq Ck^{1/2}\|w\|_{\mathcal{F}(\mathbb{T}^3)}^{1/2}\|\psi\|_{L^2(\mathbb{T}^3,w^{-1}dx)}
\end{equation*}
of \eqref{proj}, \eqref{endd} is bounded as follows:
\begin{align*}
\eqref{endd}&\leq C\|w\|_{\mathcal{F}(\mathbb{T}^3)}^{1/2}\sum_{M=1}^\infty M^{1/2}
\bigg\|\sum_{|\textbf{n}B|\in[M-1,M)}
\widehat{\psi_j}(\textbf{n})e^{i\textbf{n}\cdot x}\bigg\|_{L^2(\mathbb{T}^3)}\\
&\qquad\qquad\qquad\qquad\quad\,
\times\sup\bigg|\frac{|(\textbf{n}+\textbf{b})B|^2+z_j-(\textbf{n}+k)A(\textbf{n}+k)^T}
{[(\textbf{n}+k)A(\textbf{n}+k)^T][|(\textbf{n}+\textbf{b})B|^2+z_j]}\bigg|\\
&\leq C\|w\|_{\mathcal{F}(\mathbb{T}^3)}
\|\psi_j\|_{L^2(\mathbb{T}^3,w^{-1}dx)}\\
&\qquad\qquad\qquad
\times\sum_{M=1}^\infty M\sup\bigg|\frac{|(\textbf{n}+\textbf{b})B|^2+z_j-(\textbf{n}+k)A(\textbf{n}+k)^T}
{[(\textbf{n}+k)A(\textbf{n}+k)^T][|(\textbf{n}+\textbf{b})B|^2+z_j]}\bigg|\\
&\leq C\|w\|_{\mathcal{F}(\mathbb{T}^3)}
\|\psi_j\|_{L^2(\mathbb{T}^3,w^{-1}dx)}.
\end{align*}
Hence we get \eqref{spli}.

\section{Proof of Proposition \ref{sam}}\label{sec5}

To show \eqref{proj} in Proposition \ref{sam}, we first let
$$\phi(x)=\sum_{|\textbf{n}B|[k-1,k)}\widehat{\psi}(\textbf{n})e^{i\textbf{n}\cdot x}$$
for $\psi\in L^2(\mathbb{T}^3)$.
Then it is clear that $\phi\in C^\infty(\mathbb{T}^3)$, and
\begin{equation}\label{sgh}
\|(DAD^T-k^2-2ik)\phi\|_{L^2(\mathbb{T}^3)}
\leq Ck\|\psi\|_{L^2(\mathbb{T}^3)}
\end{equation}
since $\phi$ is the projection of $\psi$ to the subspace of $L^2(\Omega)$,
spanned by eigenfunctions with eigenvalues in $[(k-1)^2,k^2)$.
Hence, if we show the following uniform Sobolev inequality
\begin{equation}\label{sdf}
\|\varphi\|_{L^2(\mathbb{T}^3,wdx)}\leq Ck^{-1/2}\|w\|_{\mathcal{F}(\mathbb{T}^3)}^{1/2}
\|(DAD^T-k^2-2ik)\varphi\|_{L^2(\mathbb{T}^3)}
\end{equation}
for $\varphi\in C^\infty(\mathbb{T}^3)$ and $k\geq1$,
then it follows from \eqref{sgh} that
\begin{align*}
\|\phi\|_{L^2(\mathbb{T}^3,wdx)}
&\leq Ck^{-1/2}\|w\|_{\mathcal{F}(\mathbb{T}^3)}^{1/2}\|(DAD^T-k^2-2ik)\phi\|_{L^2(\mathbb{T}^3)}\\
&\leq Ck^{1/2}\|w\|_{\mathcal{F}(\mathbb{T}^3)}^{1/2} \|\psi\|_{L^2(\mathbb{T}^3)}
\end{align*}
as desired.

Now we have to show \eqref{sdf} which can be thought of as an extension of that in \cite{Sh3}
for the Morrey class $\mathcal{M}^p(\mathbb{R}^3)$ to the class $\mathcal{F}$.
Fix $x_0\in\Omega$. Let $\widetilde{\eta}\in C_0^\infty(Q(x_0,1/2))$ be such that
$\widetilde{\eta}=1$ on $Q(x_0,1/4)$.
Here, $Q(x,r)$ denotes the cube centered at $x$ with side length $r$.
Then we note that
\begin{equation}\label{bes}
\varphi(x)\widetilde{\eta}(x)^2=
\widetilde{\eta}(x)\int_{\mathbb{R}^3}F_{z,B}(x-y)(DAD^T+z)(\varphi\widetilde{\eta})(y)dy,
\end{equation}
where $F_{z,B}(x)$ is the Fourier transform of $(|yB|^2+z)^{-1}$,
given by
\begin{align*}
F_{z,B}(x)&=\frac1{\textrm{det}(B)}\int_{\mathbb{R}^3}\frac{e^{-ixB^{-1}\cdot y}}{|y|^2+z}dy\\
&=\frac{c}{\textrm{det}(B)}\Big(\frac{z}{|xB^{-1}|}\Big)^{\frac12(\frac32-1)}
K_{\frac32-1}(\sqrt{z}|xB^{-1}|).
\end{align*}
Here, $K_{\frac32-1}$ denotes the modified Bessel function of the third kind of order $3/2-1$
(see \cite{L}, p. 108).
Since $x,y\in Q(x_0,1/2)$, $|x-y|<1$.
From this, we rewrite the right-hand side of \eqref{bes} as
\begin{equation*}
\widetilde{\eta}(x)\int_{\mathbb{R}^3}F_{z,B}(x-y)\eta(|x-y|)
[(DAD^T+z)\varphi\cdot\widetilde{\eta}-2D\varphi A(D\widetilde{\eta})^T-\varphi DAD^T\widetilde{\eta}](y)dy,
\end{equation*}
where $\eta\in C_0^\infty((-2.2))$ and $\eta(r)=1$ if $|r|\leq1$.
Now we assume for the moment that
\begin{equation}\label{dfg}
\bigg\|\int_{\mathbb{R}^3}F_{z,B}(x-y)\eta(|x-y|)f(y)dy\bigg\|_{L^2(\mathbb{R}^3,wdx)}
\leq\frac C{|z|^{1/4}}\|w\|_{\mathcal{F}(\mathbb{R}^3)}^{1/2}\|f\|_{L^2(\mathbb{R}^3)},
\end{equation}
where $\text{Re}\sqrt{z}\geq1$.
Using this, we then see that for $z=-(k+i)^2$
\begin{align}\label{ppo}
\nonumber&\|\varphi\widetilde{\eta}^2\|_{L^2(\mathbb{R}^3,wdx)}\\
\nonumber&\qquad\leq Ck^{-1/2}\|w\widetilde{\eta}^2\|_{\mathcal{F}(\mathbb{R}^3)}^{1/2}\\
\nonumber&\qquad\quad\times\big(\|(DAD^T+z)\varphi\cdot\widetilde{\eta}\|_{L^2(\mathbb{R}^3)}
+\|D\varphi A(D\widetilde{\eta})^T\|_{L^2(\mathbb{R}^3)}
+\|\varphi DAD^T\widetilde{\eta}\|_{L^2(\mathbb{R}^3)}\big)\\
\nonumber&\qquad\leq Ck^{-1/2}\|w\|_{\mathcal{F}(\mathbb{T}^3)}^{1/2}\\
&\qquad\quad\times\big(\|(DAD^T-k^2-2ik)\varphi\|_{L^2(\mathbb{T}^3)}
+\|D\varphi\|_{L^2(\mathbb{T}^3)}+\|\varphi\|_{L^2(\mathbb{T}^3)}\big),
\end{align}
where we used for the last inequality the fact that $w$ and $\psi$ are periodic.
We also see that for $k\geq1$
$$\|D\varphi\|_{L^2(\mathbb{T}^3)}+\|\varphi\|_{L^2(\mathbb{T}^3)}
\leq C\|(DAD^T-k^2-2ik)\varphi\|_{L^2(\mathbb{T}^3)}$$
using the Fourier series and Parseval's formula (\cite{SW}).
Combining this and \eqref{ppo}, we get
$$\|\varphi\|_{L^2(\mathbb{T}^3,wdx)}
\leq Ck^{-1/2}\|w\|_{\mathcal{F}(\mathbb{T}^3)}^{1/2}
\|(DAD^T-k^2-2ik)\varphi\|_{L^2(\mathbb{T}^3)}$$
as desired.

It remains to show \eqref{dfg}. But this follows from the following lemma
by using partition of unity and standard rescaling argument ({\it cf. \cite{Sh3}}, Theorem 5.33).
See also \cite{Sog} (pp. 134-135) for the case of $L^2\rightarrow L^q$ estimates.

\begin{lem}\label{lem5}
Let $w\geq0$ and $w\in\mathcal{F}(\mathbb{R}^3)$.
Assume that $a\in C^\infty(\mathbb{R}^3\times\mathbb{R}^3)$ and
\begin{equation}\label{supp}
\textrm{supp}\,a\subset\{(x,y)\in\mathbb{R}^3\times\mathbb{R}^3:1/2\leq|x-y|\leq2\}.
\end{equation}
Then, for $f\in L^2(\mathbb{R}^3)$,
\begin{equation}\label{osc}
\bigg\|\int_{\mathbb{R}^3}e^{i\lambda|x-y|}a(x,y)f(y)dy\bigg\|_{L^2(\mathbb{R}^3,wdx)}
\leq C|\lambda|^{-1/2}\|w\|_{\mathcal{F}(\mathbb{R}^3)}^{1/2}\|f\|_{L^2(\mathbb{R}^3)},
\end{equation}
where $\lambda\in\mathbb{R}$ with $|\lambda|\geq1$.
\end{lem}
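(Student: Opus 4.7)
The plan is to prove \eqref{osc} by a $TT^*$ argument that reduces matters to a pointwise kernel bound for $T_\lambda T_\lambda^*$ obtained from Stein's stationary-phase method, combined with the weighted Riesz-potential estimate \eqref{ee3} already available from the paper.

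\textbf{Setup.} Writing $T_\lambda f(x)=\int_{\mathbb{R}^3}e^{i\lambda|x-y|}a(x,y)f(y)\,dy$, the estimate \eqref{osc} is equivalent to the bound $\|w^{1/2}T_\lambda\|_{L^2\to L^2}\leq C|\lambda|^{-1/2}\|w\|_{\mathcal{F}(\mathbb{R}^3)}^{1/2}$, and hence (by the $TT^*$ identity) to
$$\|w^{1/2}T_\lambda T_\lambda^*w^{1/2}\|_{L^2\to L^2}\leq C|\lambda|^{-1}\|w\|_{\mathcal{F}(\mathbb{R}^3)},$$
where the kernel of $T_\lambda T_\lambda^*$ is
$$K_\lambda(x,x')=\int_{\mathbb{R}^3}e^{i\lambda(|x-y|-|x'-y|)}a(x,y)\overline{a(x',y)}\,dy.$$
By the support hypothesis \eqref{supp} on $a$, the effective support of $K_\lambda(x,x')$ is contained in $\{|x-x'|\leq 4\}$.

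\textbf{Main analytic step (kernel bound).} The principal estimate I will establish is the pointwise bound
$$|K_\lambda(x,x')|\leq C|\lambda|^{-1}|x-x'|^{-1}.$$
The phase $\phi(y)=|x-y|-|x'-y|$ has $\nabla_y\phi=(y-x)/|y-x|-(y-x')/|y-x'|$, which vanishes precisely when $y$ lies on the line through $x$ and $x'$ outside the segment $[x,x']$; on that one-dimensional critical manifold the Hessian $\partial_y^2\phi$ has corank one but is non-degenerate in the two transverse directions. Repeated integration by parts away from a neighborhood of the axis eliminates the non-stationary region, and two-dimensional stationary phase transverse to the axis produces the factor $|\lambda|^{-1}$; the remaining one-dimensional integration along the axis, against the amplitude localized to $|x-y|,|x'-y|\sim 1$, produces the geometric factor $|x-x'|^{-1}$. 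This is the part I expect to be the main obstacle: although the method is essentially Stein's classical oscillatory integral theorem, the singular pointwise form $|\lambda|^{-1}|x-x'|^{-1}$ must be tracked uniformly, including a careful treatment of the degeneracy of $\nabla_y\phi$ as $|x-x'|\to0$ (where the trivial bound $|K_\lambda|\lesssim 1$ is used for $|x-x'|\leq|\lambda|^{-1}$ and is dominated by the Riesz-type bound on the complement).

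\textbf{Conclusion via the weighted Riesz-potential estimate.} Since $|x-x'|^{-1}$ is, up to a constant, the kernel of $I_2$ on $\mathbb{R}^3$, the kernel bound yields the pointwise domination
$$\bigl|T_\lambda T_\lambda^*(w^{1/2}f)(x)\bigr|\leq C|\lambda|^{-1}I_2(w^{1/2}|f|)(x).$$
Multiplying by $w(x)^{1/2}$, taking the $L^2(dx)$ norm, and invoking \eqref{ee3} in the form $\|I_2 g\|_{L^2(w)}\leq C\|w\|_{\mathcal{F}}\|g\|_{L^2(w^{-1})}$ with $g=w^{1/2}|f|$ (so that $\|g\|_{L^2(w^{-1})}=\|f\|_{L^2}$), I obtain
$$\|w^{1/2}T_\lambda T_\lambda^*w^{1/2}f\|_{L^2}\leq C|\lambda|^{-1}\|w\|_{\mathcal{F}(\mathbb{R}^3)}\|f\|_{L^2}.$$
Taking square roots in the $TT^*$ identity gives \eqref{osc}. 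All weighted information is thus absorbed cleanly by the already-established Riesz-potential inequality, so the only genuinely new analytic input is the unweighted oscillatory kernel estimate of the middle step.
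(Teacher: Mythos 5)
Your argument shares the skeleton of the paper's proof---a $TT^*$-type reduction followed by the weighted $I_2$ estimate \eqref{ee3}---but diverges at the crucial oscillatory-integral step, and I think the difference is worth flagging. The paper first localizes $a$ by a partition of unity, moves the weight to the dual side, and then \emph{freezes} $x_1$: the operator $S_\lambda$ maps $L^2(\mathbb{R}^3)$ to $L^2(\mathbb{R}^2_{x'})$, so the kernel of $S_\lambda^*S_\lambda$ is a \emph{two-dimensional} integral $\int_{\mathbb{R}^2}e^{-i\lambda(|y-x|-|z-x|)}\,\overline{a}\,a\,dx'$. This is exactly the $TT^*$ kernel of the Carleson--Sj\"olin/restriction setting, and the bound $|K_\lambda(y,z)|\le C(1+|\lambda||y-z|)^{-1}$ can simply be cited from Stein (p.~382). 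After that, $|S_\lambda^*S_\lambda f|\le C|\lambda|^{-1}I_2(|f|)$ and \eqref{ee3} finish the job, with the $x_1$-integration contributing only a bounded factor. You instead take $T_\lambda T_\lambda^*$ over all of $\mathbb{R}^3$, so your kernel is the \emph{three-dimensional} integral $\int_{\mathbb{R}^3}e^{i\lambda(|x-y|-|x'-y|)}a(x,y)\overline{a(x',y)}\,dy$. The phase then has a one-dimensional manifold of critical points (the two rays through $x,x'$ outside the segment), with transverse Hessian eigenvalues of size $\sim|x-x'|$, and the claimed bound $|K_\lambda|\lesssim|\lambda|^{-1}|x-x'|^{-1}$ does come out of the stationary-phase heuristics you sketch (the $|x-x'|^{-1}$ arising from $|\det''\phi|^{-1/2}$, not from the axial integration as your write-up suggests). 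This bound is in fact correct, and once it is in hand the rest of your argument (pointwise domination by $|\lambda|^{-1}I_2$, then \eqref{ee3} with $g=w^{1/2}|f|$) is exactly parallel to the paper's and sound. So your route is a genuine alternative: it avoids the slightly artificial $x_1$-slicing, but the price is that the key kernel estimate is no longer a direct quotation from Stein---it is a degenerate stationary-phase estimate with a one-dimensional critical set, and the uniformity in $|x-x'|$ down to the threshold $|\lambda|^{-1}$ (which you correctly identify as the crossover to the trivial bound) needs to be carried out carefully, for instance by dyadic decomposition in the transverse distance to the axis. You have honestly flagged this as the main gap; the paper's slicing device is precisely what removes the degeneracy and lets one appeal to a ready-made kernel bound. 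One small additional remark: the paper's preliminary partition of unity restricting $a$ to $|x-x_0|,|y-y_0|<\delta$ is not purely cosmetic---it keeps the geometry of the critical set uniformly controlled---so if you pursue the 3D route you may find it convenient to retain that localization as well, even though your compactly supported smooth amplitude already removes boundary terms in the integrations by parts.
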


\begin{rem}
This lemma is an extension of Theorem 5.5 in \cite{Sh3} for the Morrey class $\mathcal{M}^p(\mathbb{R}^3)$
to the class $\mathcal{F}$, which is also
a weighted version of an oscillatory integral theorem of Stein (see \cite{St}, p. 380).
\end{rem}

\begin{proof}[Proof of Lemma \ref{lem5}]
Using partition of unity and the assumption \eqref{supp}, we may first assume that
\begin{equation*}
\textrm{supp}\,a\subset\{(x,y)\in\mathbb{R}^3\times\mathbb{R}^3:
|x-x_0|<\delta,\, |y-y_0|<\delta,\, 1/2\leq|x-y|\leq2\}
\end{equation*}
for some $x_0,y_0\in\mathbb{R}^3$ and a sufficiently small $\delta>0$.
Since $1/|x|^2\in\mathcal{M}^p(\mathbb{R}^3)\subset\mathcal{F}$ clearly,
we may also assume that $w>0$ by replacing $w$ with
$\widetilde{w}(x)=w(x)+\varepsilon/|x|^2$ and then letting $\varepsilon\rightarrow0$.
By duality, \eqref{osc} is equivalent to
\begin{equation*}
\bigg\|\int_{\mathbb{R}^3}e^{i\lambda|x-y|}a(x,y)f(y)dy\bigg\|_{L^2(\mathbb{R}^3)}
\leq C|\lambda|^{-1/2}\|w\|_{\mathcal{F}(\mathbb{R}^3)}^{1/2}\|f\|_{L^2(\mathbb{R}^3,w^{-1}dx)}.
\end{equation*}
By translation we may assume here that $x_0=0$,
and since $a(x,y)=0$ if $|x_1|>\delta$, we only need to show that
\begin{equation}\label{osc3}
\int_{\mathbb{R}^2}\bigg|\int_{\mathbb{R}^3}e^{i\lambda|x-y|}a(x_1,x',y)f(y)dy\bigg|^2dx'
\leq C|\lambda|^{-1}\|w\|_{\mathcal{F}(\mathbb{R}^3)}\int_{\mathbb{R}^3}|f|^2w^{-1}dz
\end{equation}
for any fixed $x_1\in[-\delta,\delta]$. Here, $x'=(x_2,x_3)\in\mathbb{R}^2$.

To show \eqref{osc3}, we first let
\begin{equation*}
S_\lambda f(x')=\int_{\mathbb{R}^3}e^{i\lambda|x-y|}a(x_1,x',y)f(y)dy
\end{equation*}
for $x_1\in[-\delta,\delta]$ fixed.
Then, using the adjoint operator $S_\lambda^\ast$ of $S_\lambda$,
\eqref{osc3} follows easily from
\begin{equation}\label{osc4}
\|w^{1/2}S_\lambda^\ast S_\lambda(w^{1/2}f)\|_{L^2(\mathbb{R}^3)}
\leq C|\lambda|^{-1}\|w\|_{\mathcal{F}(\mathbb{R}^3)}\|f\|_{L^2(\mathbb{R}^3)},
\end{equation}
where
$$S_\lambda^\ast S_\lambda f(y)=\int_{\mathbb{R}^3}K_\lambda(y,z)f(z)dx$$
with
$$K_\lambda(y,z)=\int_{\mathbb{R}^2}e^{-i\lambda(|y-x|-|z-x|)}\overline{a}(x_1,x',y)a(x_1,x',y)dx'$$
such that
\begin{equation}\label{ker}
|K_\lambda(y,z)|\leq\frac{C}{1+|\lambda||y-z|}
\end{equation}
(see \cite{St}, p. 382).
The key point here is that the kernel $K_\lambda(y,z)$ can be controlled by that of the fractional integral operator $I_2$ of order $2$ given in \eqref{fi}.
Indeed, by \eqref{ker} it is clear that
$$|S_\lambda^\ast S_\lambda f(y)|\leq C|\lambda|^{-1}I_2(|f(z)|)(y).$$
Thus by the estimate \eqref{ee3} it follows that
\begin{align*}
\|S_\lambda^\ast S_\lambda f\|_{L^2(\mathbb{R}^3,wdy)}
&\leq C|\lambda|^{-1}\|I_2(|f(z)|)\|_{L^2(\mathbb{R}^3,wdy)}\\
&\leq C|\lambda|^{-1}\|w\|_{\mathcal{F}(\mathbb{R}^3)}\|f\|_{L^2(\mathbb{R}^3,w^{-1}dz)}
\end{align*}
which is clearly equivalent to \eqref{osc4}.
The proof is now completed.
\end{proof}


\end{document}